\documentclass[11pt,a4paper]{amsart}
\usepackage{graphicx} 
\usepackage{amsmath, amssymb,amsthm,mathtools}
\usepackage{tikz-cd}
\usepackage{todonotes}
\usepackage{svg}
\usepackage{xcolor}
\usepackage{hyperref}
\usepackage{comment}
\usepackage{geometry}

\setuptodonotes{size=\footnotesize}
\title{Stable homology of complex braid groups}
\author{Andrea Bianchi} 
\address{Dipartimento di Matematica, Università di Bologna, Italy} \email{andrea.bianchi37@unibo.it}
\author{Filippo Callegaro}
\address{Dipartimento di Matematica, Università di Pisa, Italy.} \email{callegaro@dm.unipi.it}
\thanks{The second author acknowledges INdAM/GNSAGA}
\author{Luigi Caputi}
\address{Dipartimento di Matematica, Università di Bologna, Italy}
\email{luigi.caputi@unibo.it }
\author{Paolo Salvatore}
\address{Dipartimento di Matematica, Università di Roma Tor Vergata, Italy}
\email{salvator@mat.uniroma2.it }
\thanks{The fourth author acknowledges the
MUR Excellence Project MatMod@TOV awarded to the Department of Mathematics, University
of Roma Tor Vergata, CUP E83C18000100006.}

\date{\today}

\newtheorem{theorem}{Theorem}[section]
\newtheorem{corollary}[theorem]{Corollary}

\newtheorem{lemma}[theorem]{Lemma}
\newtheorem{proposition}[theorem]{Proposition}

\newtheorem{athm}{Theorem}

\theoremstyle{definition}
\newtheorem{definition}[theorem]{Definition}

\newtheorem{remark}[theorem]{Remark}

\newtheorem{notation}[theorem]{Notation}

\newcommand{\bB}{\mathbf{B}} 
\newcommand{\D}{\mathcal{D}}
\newcommand{\A}{\mathcal{A}}
\newcommand{\DD}{\mathfrak{D}}
\renewcommand{\AA}{\mathfrak{A}}
\newcommand{\cS}{\mathcal{S}}
\newcommand{\fib}{\mathrm{fib}}
\newcommand{\map}{\mathrm{map}}
\newcommand{\Mod}{\mathbf{Mod}}
\newcommand{\bR}{\mathbb{R}}
\newcommand{\bC}{\mathbb{C}}
\newcommand{\cL}{\mathcal{L}}
\newcommand{\bZ}{\mathbb{Z}}
\newcommand{\bN}{\mathbb{N}}
\newcommand{\bz}{\mathbf{z}}
\newcommand{\Conf}{\mathrm{Conf}}
\newcommand{\CConf}{\mathfrak{Conf}}
\newcommand{\colim}{\mathrm{colim}}
\newcommand{\Tel}{\mathrm{Tel}}
\newcommand{\op}{\mathrm{op}}

\newcommand{\sslash}{\mathbin{/\mkern-6mu/}}
\begin{document}
\begin{abstract}
We compute the stable homology of complex braid groups of types $B(e,e,n)$ and $B(2e,e,n)$ for fixed $e\ge2$ and increasing $n$. This accounts for the stable homology of all infinite families of complex braid groups. We achieve this by explicitly computing a quillenization of their stable classifying spaces.  In particular, we provide a proof of an identification of the stable homology of Artin groups of type $D$ claimed by Fuchs in the '70s.
\end{abstract}
\maketitle

\section{Introduction}
This article is first concerned with the homology of Artin groups of type $D$. The subject is classical; for instance, Goryunov \cite{Gory78, Gory82} has  computed the entire homology of the group $D_n$ with integer coefficients for any $n$. Homological stability holds for the sequence of Artin groups of type $D$, and it is meaningful to ask for a concrete description of the quillenization (or plus construction) $\bB D_\infty^+$ of the classifying space of the colimit group $D_\infty:=\colim_{n\to\infty}D_n$.
In \cite[\S2.7, p.~34]{fuks74}, Fuchs claims the following:
\begin{quote}
The quillenization of the space $K(B_C, 1)$ is $(\Omega^2S^2)_0 \times \Omega S^2$, 
and
the quillenization of the space $K(B_D, 1)$ is the product of the space $(\Omega^2S^2)_0$ by the homotopy fiber of the map $S^3 \to S^3$ of degree $2$.
Both facts are proved on the basis of the theorem of Segal.
\end{quote}
Vershinin has written a detailed proof of the claim for the case $C$ in \cite[Thm.~11.1, p.~315]{Ver99}. For the case $D$, Fuchs claims the following equivalence of spaces:
\begin{equation}\label{eq:main} 
\bB D_\infty^+\simeq\Omega^2S^3\times\fib(S^3\xrightarrow{2}S^3).
\end{equation}
However, a direct and transparent derivation did not seem to be readily available to the authors. 
Therefore, an aim of the present paper is to provide a detailed proof of \eqref{eq:main}. More importantly, we place this result in a broader framework by computing the quillenization of a bigger family of configuration spaces 
closely related to the complex braid groups. This leads to a uniform description of the stabilized
homotopy types associated with the complex braid groups of type $B(e,e,n)$ and $B(2e,e,n)$, extending the picture suggested by Fuchs beyond the cases of type $B$ and $D$.
\begin{athm}
    \label{thm:0}
    For $e\ge1$ and $d \geq 2$ we have the following equivalences of spaces:
    \[
    \bB{B(e,e,\infty)}^+ \simeq \Omega^2S^3\times\fib(S^3\xrightarrow{e}S^3);\quad \quad\bB{B(de,e,\infty)}^+ \simeq \Omega^2S^3\times\Omega S^2.
    \]
\end{athm}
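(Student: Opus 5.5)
We model the classifying spaces by configuration spaces and then apply a group-completion argument. The complex reflection group $G(de,e,n)$ acts on $\bC^n$, and $B(de,e,n)$ is the fundamental group of the complement of the reflection arrangement modulo the group. This orbit space is aspherical, so we may use it as the classifying space.

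For $d\ge2$, the quotient $\bC^n/G(de,e,n)$ has coordinates the elementary symmetric functions in $z_i^{de}$ together with $(z_1\cdots z_n)^d$. The discriminant complement is then homotopy equivalent to the space $\CConf_n$ of $n$ unordered distinct points in $\bC^*$ (taking $w_i=z_i^{de}$). This is the same as for type $B$, since the extra root of $\prod w_i$ does not change the homotopy type. Hence $\bB B(de,e,n)\simeq \Conf_n(\bC^*)$, whose stable homology is given by the $C$ case.

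For $d=1$, the orbit space is instead the space of $n$-point configurations $\{w_i\}$ in $\bC$ with $w_i=z_i^e$, at most one of them equal to $0$, and decorated by a choice of $e$-th root of $\prod w_i$ when no point is at $0$. Equivalently, it is a $\bZ/e$-cover of $\Conf_n(\bC^*)$ together with the stratum where one point sits at the origin. I would realize this uniformly as a configuration space in the annulus with a boundary ``charge'' or monodromy label in $\bZ/e$.

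The key step is a scanning / group-completion theorem (Segal, McDuff) for configurations in $\bC^*\simeq S^1\times\bR$ with labels. The configuration spaces $\Conf_n(S^1\times(0,1))$ form a module over the $E_1$-algebra $\coprod_n\Conf_n(\bR^2)$ by stacking on the outer boundary. Stabilization is adding a point near infinity, and the group completion theorem gives
\[
\colim_n H_*(\Conf_n(\bC^*)) \cong H_*(\Omega_0\Gamma),
\]
where $\Gamma$ is the space of sections. Scanning identifies the completed space with the space of compactly supported sections over the annulus with fiber $S^2$, relative to the inner boundary. This yields $\Omega^2_0S^2\times\Omega S^2\simeq\Omega^2S^3\times\Omega S^2$ (the free loop structure $L\Omega S^2$ splits because $S^2$-maps from the torus relative to a point behave like this). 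This settles the second equivalence, with $\bB B(de,e,\infty)^+\simeq\Omega^2S^3\times\Omega S^2$. The products are simple spaces with perfect commutator subgroup killed, so the homology equivalence is an equivalence after the plus construction.

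For $B(e,e,\infty)$, we pass to the $\bZ/e$-cover determined by the homomorphism $B(1,1,n)\times\ldots\to\bZ$ given by the winding number $\sum$ of the points around $0$, reduced mod $e$. On the stable side, this winding number corresponds to the projection $\Omega^2S^3\times\Omega S^2\to\Omega S^2\to\Omega S^3$ induced by the Hopf map. We then need to add back the stratum with a point at the origin, which, after stabilizing, glues the $\bZ/e$-cover with its degree-$e$ identification. The resulting fiber is $\fib(\Omega S^2\to \Omega S^3\to\ldots)$. More concretely, I would show that the stabilized space is the homotopy fiber of $\Omega^2S^3\times\Omega S^2\to K(\bZ/e,1)$ twisted appropriately. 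Since $\Omega S^2\simeq S^1\times\Omega S^3$, the $\bZ/e$-cover replaces $S^1$ by its $e$-fold cover, and the point at the origin cones this off. The combination gives $\Omega S^3\{e\}=\Omega\fib(S^3\xrightarrow{e}S^3)$ delooped, which is $\fib(S^3\xrightarrow eS^3)$.

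The main obstacle is this last identification, which requires a careful scanning model for the mixed configuration space including the origin stratum. My plan is to prove a relative group completion: stabilization by points at infinity is compatible with the cover and with the stratum. I would then identify the result via a homotopy pushout $S^1\leftarrow S^1\xrightarrow{e}\ldots$, so that it appears as the fiber of the degree-$e$ map $S^3\to S^3$. Finally, I would check $\pi_1$ and homology, for instance $H_2=\bZ/e$-type torsion matching the known answer for type $D$ when $e=2$.
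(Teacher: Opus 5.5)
There are genuine gaps, one in each half of the statement.

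\textbf{The case $d\ge 2$.} Your reduction rests on a false claim. The orbit space $X/G(de,e,n)$ is not homotopy equivalent to $\Conf_n(\bC^*)$. The invariant $y=(z_1\cdots z_n)^d$ satisfies $y^e=\prod_i w_i$ and has nontrivial monodromy: a single point $w_i$ circling $0$ multiplies $y$ by $\exp(2\pi i/e)$. So the orbit space is $\Conf^*_n(e)$, a \emph{connected} $e$-fold cover of $\Conf_n(\bC^*)$. This is the same cover you correctly use in your $d=1$ paragraph. These are different homotopy types: for $e=n=2$ the fundamental group is $B(4,2,2)\cong\langle s,t,u\mid stu=tus=ust\rangle$, with abelianization $\bZ^3$, whereas the Artin group of type $B_2$ has abelianization $\bZ^2$.

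Your final answer survives only because of two facts you neither state nor prove:
\begin{enumerate}
\item Group completion commutes with this cover, so $\bB(\D^*(e),\A,*)\simeq\Omega S^2\langle e\rangle$ is the connected $e$-fold cover of $\bB(\D^*(1),\A,*)\simeq\Omega S^2$. The paper obtains this from pullback squares of simplicial spaces over $\prod\colon\D^*(1)\to\bC^*$ and $(-)^e\colon\bC^*\to\bC^*$, using that the bottom-right corner is constant.
\item $\Omega S^2\langle e\rangle\simeq\Omega S^2$ as spaces, because $\Omega S^2\simeq S^1\times\Omega S^3$.
\end{enumerate}
You also never explain why $\Omega^2S^3$ splits off as a product factor. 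In the paper this comes from the $\A$-module map to $\A$ that forgets $y$, via Lemma~\ref{lem:factorization}.

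\textbf{The case $B(e,e,\infty)$.} Here the proposal is a heuristic without the mechanism that makes it work, and its one concrete assertion is wrong. A homotopy fiber of a map $\Omega^2S^3\times\Omega S^2\to K(\bZ/e,1)$ is just a $\bZ/e$-cover. It therefore reproduces the answer $\Omega^2S^3\times\Omega S^2\langle e\rangle$ for $\D^*(e)$, not the one for $\D(e)$. Also, the winding number corresponds to the $1$-truncation $\Omega S^2\to S^1$, not to a map $\Omega S^2\to\Omega S^3$ induced by the Hopf map.

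The missing idea is how to make ``adding the origin stratum'' compatible with group completion. The paper does this in four steps:
\begin{enumerate}
\item It makes $\D^*(e)$ itself a monoid acting on $\D(e)$ by concentric concatenation, compatibly with the $\A$-action through $H\colon\A\to\D^*(e)$.
\item It presents $\D(e)$ as a homotopy pushout of $\D^*(e)$-modules $\D^*(e)\leftarrow\D^*(e)\sslash\bZ\to\D^*(e)\sslash(\bZ/e)$. This comes from the open cover by configurations avoiding $0$ and configurations with one point much closer to $0$ than the others.
\item It applies $\bB(-,\A,*)$. Using $L_\A(\D^*(e))\simeq G(\D^*(e))$ and the fiber sequence $G(\A)\to G(\D^*(e))\to\Omega S^2\langle e\rangle$, this turns the square into a pushout of modules over the group $\Omega S^2\langle e\rangle$.
\item It rewrites the result as $\bB(\D(e),\A,*)\simeq\fib(\bB\bZ/e\sqcup_{\bB\bZ}*\to\bB(\Omega S^2\langle e\rangle))$ and passes to $2$-connected covers to get $\fib$ of a self-map of $S^3$. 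The degree is pinned to $e$ using the known stable groups $H_1(\Conf_n(e))\cong\bZ$ and $H_2(\Conf_n(e))\cong\bZ/e\times\bZ/2$.
\end{enumerate}
Your ``relative group completion compatible with the cover and the stratum'' is a placeholder for exactly these steps, and the pushout $S^1\leftarrow S^1\xrightarrow{e}\cdots$ you gesture at is never made precise.
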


Theorem \ref{thm:0} follows directly from Theorem \ref{thm:A} and Theorem \ref{thm:B}. These will be phrased in terms of group completions of monoids and localizations of modules.

\begin{notation}
For an $E_1$-algebra in spaces $A$ and a right $E_1$-module $M$ over $A$ we denote by $G(A)$ the group completion of $A$, and by $L_A(M)$ the localization of $M$ with respect to all elements of $A$. We have an equivalence of group-like $E_1$-algebras (see \cite{BP72, May75, Quillen94})
\[
G(A)\simeq\Omega\bB A,
\]
where we abbreviate by $\bB A$ the bar construction $\bB(*,A,*)$ with respect to the terminal left and right $E_1$-modules over $A$; we similarly have
\[
L_A(M)\simeq \bB(M,A,G(A))\simeq \fib(\bB(M,A,*)\to \bB A),
\]
where $\fib$ denotes the (homotopy) fiber; see  Lemma \ref{lem:localization_via_fiber} for the above equivalences.
\end{notation}

We shall denote by $\A=\coprod_{n\ge0} \bB{A_{n-1}}$ the braid monoid, which we shall only regard as an $E_1$-monoid; both $\D(e)=\coprod_n \bB B(e,e,n)$ and $\D^*(e)=\coprod _n \bB B(2e,e,n)$ have natural structures of modules over $\A$: see Section \ref{sec:complex_braid_and_conf} for details. 

In \cite{Segal73} Segal proves that the group completion of $\A$ is $G(\A)\simeq \Omega^2S^2$, whereas the group-completion theorem \cite{McDuffSegal} implies the equivalence of spaces $G(\A)\simeq\bZ\times \bB A_\infty^+$. Note that after forgetting the $E_1$-algebra structure we have an equivalence of spaces $\Omega^2S^2\simeq\bZ \times \Omega^2S^3$, whence we also have $\bB A_\infty^+\simeq\Omega^2S^3$. Similarly, the quillenization of $\bB B(e,e,\infty)$ enters the equivalence of spaces 
$L_\A(\D(e))\simeq\bZ\times\bB B(e,e,\infty)^+$, see Lemma \ref{lem:localization_quillenization} for details.
\begin{athm}
\label{thm:A}
Let $e\ge2$. Then there is a homotopy equivalence of right $E_1$-modules over $G(\A)\simeq\Omega^2S^2$
\[
L_\A(\D(e))\simeq G(\A)\times \fib(S^3\xrightarrow{e}S^3),
\]
in which the action of $G(\A)$ on the right hand side is only on the first factor, and $S^3\xrightarrow{e}S^3$ denotes a degree $e$ self map of $S^3$.
\end{athm}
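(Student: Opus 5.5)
Following the Notation, we compute $L_\A(\D(e))$ as the homotopy fiber of $\bB(\D(e),\A,*)\to\bB\A\simeq\Omega S^2$; the proof then splits into identifying $\bB(\D(e),\A,*)$ and identifying this map. The model we use is the one from Section \ref{sec:complex_braid_and_conf}. Here $\bB B(e,e,n)$ is the configuration space of $n$-point subsets of $\bC^*$, up to the deck action of $\bZ/e$ on the $e$-fold cover $z\mapsto z^e$. Equivalently, it is a configuration space of $n$ unordered points in $\bC^*$ decorated by a global $\bZ/e$-label. The right $\A$-action inserts configurations of $\bC$ "far away" in a half-plane. Then $\D(e)$ is a configuration-type module over the $E_1$-algebra $\A\simeq\coprod_n\Conf_n(\bC)$.

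\textbf{Step 1 (scanning for the bar construction).} Our first step is a Segal/McDuff-type scanning argument showing that
\[
\bB(\D(e),\A,*)\simeq \mathrm{Sect}
\]
is a space of compactly supported sections. These sections live over the punctured plane with points allowed to escape to infinity in one direction, with fiber $S^2$, twisted by the $\bZ/e$-data. For untwisted $\D(1)$ (type $B$, configurations in $\bC^*$) this should give $\map_*(\text{pair},S^2)$ over a region homotopy equivalent to $S^1$ relative to a boundary arc, namely $\Lambda S^2$-like. Concretely, we expect $\bB(\D(1),\A,*)\simeq \Omega S^2\times\Omega S^2$ or a free loop variant, which recovers Vershinin's type $C$ case. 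For the type-$D$ data we would instead realize $\bB(\D(e),\A,*)$ as a homotopy quotient $\bB(\D_{\mathrm{ord}},\A,*)\sslash(\bZ/e)$. The effect is to replace the circle around the puncture by its $e$-fold cover, so that scanning produces sections over the $e$-fold cover modulo rotation.

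\textbf{Step 2 (computing the fiber).} Next we compute the fiber over $\bB\A=\Omega S^2$. Restricting sections to the boundary arc at infinity gives the map to $\bB\A$, so the fiber is a section space over the annulus relative to the outer boundary. This is $\map$ from the circle around $0$ into $\Omega S^2\simeq \Omega^2 S^2$-type data. The $G(\A)$-equivariance is automatic, since the action is by stacking in the collar. We then split off $G(\A)$ as a factor via a section given by the action on a basepoint configuration. Because $\Omega^2S^2$ is an $E_2$-algebra acting compatibly, the $E_1$-module splits as $G(\A)\times F$ with $F$ the fiber over one component.

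\textbf{Step 3 (identifying $F$).} We must show $F\simeq\fib(S^3\xrightarrow{e}S^3)$; this step is the main obstacle. The plan is to see $F$ as the homotopy fiber of a map $\Omega S^2$-ish$\to$ something arising from the $\bZ/e$ covering, and to compare via the Hopf fibration. The key is the identity $\Omega S^2\simeq S^1\times\Omega S^3$, under which the $e$-fold cover of the puncture-loop acts as multiplication by $e$ on $\pi_3(S^2)$-type classes through the Hopf map. The extra $\bZ/e$ homotopy quotient kills the $S^1$-factor. We first try to exhibit a fiber sequence $F\to S^3\xrightarrow{e}S^3$ directly, or equivalently $\Omega S^3\to F\to S^3$ with connecting map $\Omega S^3\to\Omega S^3$ of degree $e$ on $\pi_2$. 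We would then check the answer homologically: $H_*(F)$ should be $\bZ$, with $\bZ/e$-torsion pattern matching Goryunov's stable type-$D$ computation when $e=2$. Finally, we confirm the equivalence by a map inducing an isomorphism on homology between simple spaces.
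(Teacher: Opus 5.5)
\textbf{Gap 1: the model of $\D(e)$ is the wrong space.} $\Conf_n(e)$ consists of configurations in $\bC$, not in $\bC^*$. One point may sit at the origin, and then $y=0$. Configurations in $\bC^*$ together with an $e$-th root $y$ of $\prod z_i$ form $\Conf_n^*(e)$, the classifying space of $B(2e,e,n)$. Moreover, $y$ defines a \emph{connected} $e$-fold cover of $\Conf_n(\bC^*)$ for $n\ge1$, not a global $\bZ/e$-label. Note also that $\D(1)\cong\A$, so $\bB(\D(1),\A,*)\simeq *$; type $B$ is $\D^*(1)$, with $\bB(\D^*(1),\A,*)\simeq\Omega S^2$.

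Done correctly, scanning with your data computes $\bB(\D^*(e),\A,*)\simeq\Omega S^2\left<e\right>\simeq S^1\times\Omega S^3$ (Theorem \ref{thm:B}), which has $\pi_1\cong\bZ$. By contrast, $\fib(S^3\xrightarrow{e}S^3)$ is simply connected with $\pi_2\cong\bZ/e$.

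Your fix, that an extra $\bZ/e$ homotopy quotient kills the $S^1$-factor, cannot work. For connected $X$, the fiber sequence $X\to X\sslash(\bZ/e)\to\bB\bZ/e$ shows that $\pi_1(X\sslash(\bZ/e))$ surjects onto $\bZ/e$, so a simply connected space is never such a quotient. Indeed, the $\bZ/e$-action on $\D(e)$ rotating $y$ has fixed points at $y=0$, and $\D(e)\to\A$ is a branched quotient.

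\textbf{The missing idea: treat the origin stratum as a separate piece.} Cover $\D(e)$ by two pieces:
configurations avoiding $0$, which is $\D^*(e)$;
configurations with one point near $0$, which is $\simeq\D^*(e)/(\bZ/e)$ since a point at $0$ forces $y=0$.
They are glued along $\D^*(e)\sslash\bZ$, since a point near $0$ winding once multiplies $y$ by $e^{2\pi i/e}$. This is the homotopy pushout of $\D^*(e)$-modules in Proposition \ref{prop:pushoutpresentation}.

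Applying $\bB(-,\A,*)$ and Theorem \ref{thm:B} gives $\bB(\D(e),\A,*)\simeq\fib\bigl(\bB\bZ/e\sqcup_{\bB\bZ}*\to\bB(\Omega S^2\left<e\right>)\bigr)$. It is the pushout $\bB\bZ/e\sqcup_{\bB\bZ}*$, not a homotopy quotient, that kills $\pi_1$. The map is a $\pi_2$-isomorphism because stable $H_1(\Conf_n(e))\cong\bZ$ forces $\pi_1$ of the fiber to vanish. On $2$-connected covers both sides become $S^3$, and the degree $e$ is read off from the stable $H_2(\Conf_n(e))\cong\bZ/e\times\bZ/2$.

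\textbf{Gap 2: the splitting of $G(\A)$.} The orbit map $g\mapsto m\cdot g$ exists for every module and does not split it; the $E_2$-structure plays no role. What splits off $G(\A)$ is a map of right $\A$-modules $\D(e)\to\A$, namely forgetting $y$. This nullhomotopes $\bB(\D(e),\A,*)\to\bB\A$ and gives $L_\A(\D(e))\simeq G(\A)\times\bB(\D(e),\A,*)$ as $G(\A)$-modules, by Lemma \ref{lem:factorization}.

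\textbf{Gap 3: Step 3 has no map.} You never construct a map $F\to\fib(S^3\xrightarrow{e}S^3)$ or a fiber sequence $\Omega S^3\to F\to S^3$, so the closing homology comparison has nothing to apply to. Even with a map, that comparison would require the full stable homology of $B(e,e,n)$, which for general $e$ is what is being computed. The paper's pushout-and-fiber argument avoids any explicit map (its explicit scanning map in the last section is only conjectural) and needs only $H_1$ and $H_2$. Finally, the equivalence ``$\Omega S^2\simeq\Omega^2S^2$'' in your Step 2 is false.
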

As an immediate application, we identify for any $e\geq 2$ the stable integral homology of complex braid groups of type $B(e,e,n)$ with the homology of the space $\Omega^2S^3\times\fib(S^3\xrightarrow{e}S^3)$; to the best of our knowledge, this is a new result in the literature.  
\begin{remark}
We recall (see, for example~\cite[Section~1.1]{brotolevi2000}) that we can compute the homology of $\fib(S^3 \stackrel{e}{\to}S^3)$ via the Serre spectral sequence associated with the homotopy fiber sequence $\fib(S^3 \stackrel{e}{\to}S^3)\to S^3\stackrel{e}{\to}S^3$: for $n\ge1$ we get that the $n$\textsuperscript{th} homology of $\fib(S^3 \stackrel{e}{\to}S^3)$ is isomorphic
to the cyclic group $\bZ/e$ if $n$ is even, and is trivial if $n$ is odd. The homology of $\bB A_\infty$ has been computed explicitly in \cite{fuks70, Vainshtein1978, cohen1976}.\end{remark}

As a side result, we also extend the statement of Fuchs about the quillenization of the space $K(B_C,1)$ to the general case of families of complex braid groups of type $B(2e,e,n)$ for fixed $e$.  It was already known (\cite[Prop. 4.24]{CalMar11}) that the homology of $B(2e,e,\infty)$ is equivalent to the homology of $B(2,1,\infty)$. Our second main result is the following;
\begin{athm}
\label{thm:B}
There is a homotopy equivalence of right $E_1$-modules over $G(\A)\simeq\Omega^2S^2$
\[
L_\A(\D^*(e)) \simeq G(\A)\times\Omega S^2\left<e\right>,
\]
in which the action of $G(\A)$ on the right hand side is only on the first factor, and $\Omega S^2\left<e\right>$ is the connected $e$-fold covering of the space $\Omega S^2$.  
\end{athm}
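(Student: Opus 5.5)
We will model $\D^*(e)=\coprod_n \bB B(2e,e,n)$ by configuration spaces. Since $B(2e,e,n)$ is the fundamental group of the space of unordered $n$-point configurations in $\bC^*$, and this space is aspherical, we get $\D^*(e)\simeq\coprod_n \Conf_n(\bC^*)$. Equivalently these are configurations in an annulus $A=S^1\times[0,1]$. The right $\A$-action adds configurations of the disc $D^2$ near one boundary circle, so $\A\simeq\coprod_n\Conf_n(D^2)$ acting by boundary concatenation. We will check that this model is compatible with the module structure fixed in Section~\ref{sec:complex_braid_and_conf}, perhaps after passing to an $e$-fold cover.

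Note that $B(2e,e,n)$ is an index-$e$ subgroup of $B(2e,1,n)=\pi_1\Conf_n(\bC^*)$. It is the kernel of the composite $B(2e,1,n)\to\bZ\to\bZ/e$, where the map to $\bZ$ is the total winding number of the configuration around $0$. So $\D^*(e)$ is the $e$-fold cover of $\coprod_n\Conf_n(\bC^*)$ classified by the winding number mod $e$. This cover is compatible with the $\A$-action, because added disc configurations have zero winding around the origin. Our plan is first to treat the case $\coprod_n\Conf_n(\bC^*)$, which gives $L_\A\simeq G(\A)\times\Omega S^2$, and then to pass to the cover.

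For the base case we use Lemma~\ref{lem:localization_via_fiber}, namely $L_\A(M)\simeq\fib(\bB(M,\A,*)\to\bB\A)$. Segal's scanning (or nonabelian Poincaré duality) identifies $\bB(M,\A,*)$ for $M=\coprod\Conf(S^1\times[0,1])$ with configurations in $S^1\times[0,1)$ with labels allowed to escape through the open end. This is $\map(S^1, S^2)$, or more precisely the section space that scanning gives for the annulus relative to one boundary circle. The target $\bB\A\simeq\Omega S^2$ is scanning on the interval. The fibre of the restriction $\map(S^1,S^2)\to S^2$, or the free-to-based comparison made precise in the bar model, should then be $\Omega^2S^2\times\Omega S^2$. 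Concretely, $L\map(S^2)\simeq\Omega^2S^2$-torsor over $\Omega S^2$ in the appropriate sense. The $G(\A)$-action is by the loop factor, and it splits off because $S^2$ admits the needed sections after looping, $\Omega S^2\simeq S^1\times\Omega S^3$.

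To incorporate the cover, we observe that the winding number corresponds, under scanning, to the $\pi_1(\Omega S^2)=\bZ$ component of the $\Omega S^2$ factor, that is, the degree of the radial part. Taking the $e$-fold cover of $M$ then pulls back to the connected $e$-fold cover $\Omega S^2\langle e\rangle$ of that factor. This holds because localization commutes with passing to covers classified by an $\A$-invariant map to $B\bZ/e$, since localization is a colimit over $\A$ and the covering is a pullback over $B\bZ/e$.

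The main obstacle is the scanning step. We must identify the bar construction of the module of annular configurations with the right section/mapping space, and verify that the resulting fibration splits as right $G(\A)$-modules. We would first try Segal's argument applied to the relative configuration space $C(S^1\times[0,1), S^1\times\{1\})$ and compare it with the pair for $\A$. We would then prove the splitting by exhibiting a module map $G(\A)\times\Omega S^2\to L_\A$ induced from $\Conf(\bC^*)$ in degree $\bZ$ classes and checking that it is a homology isomorphism.
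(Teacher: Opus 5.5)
There is a genuine gap, and it sits in the base case $e=1$, which carries all the content. Your covering step is essentially the paper's: pull back along an $\A$-invariant classifying map and use that realization commutes with such pullbacks. Your second paragraph also correctly fixes the first one's misstatement ($B(2e,e,n)=\pi_1\Conf_n(\bC^*)$ only for $e=1$; in general $\D^*(e)$ is the $e$-fold winding cover).

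The problem begins with the identification of the bar construction. The $\A$-action is an $E_1$-action: disc configurations are inserted in a sector adjacent to an \emph{arc} of the outer boundary, not around a whole circle. So $\bB(\D^*(1),\A,*)$ consists of annular configurations allowed to escape only through that arc. McDuff's scanning identifies it with the maps $S^1\times[0,1]\to S^2$ sending the inner circle and the complementary outer arc to the basepoint, i.e.\ with $\Omega S^2$ (Remark~\ref{rem:McDuff}). It is not $\map(S^1,S^2)=\cL S^2$; already $\pi_2(\cL S^2)\cong\bZ^2$ while $\pi_2(\Omega S^2)\cong\bZ$. The space you actually describe, with escape through an entire boundary circle, is contractible: push all points radially outward. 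The fibre you then compute is also of the wrong map. Lemma~\ref{lem:localization_via_fiber} asks for the fibre of $\bB(M,\A,*)\to\bB\A\simeq\Omega S^2$, whereas the fibre of evaluation $\cL S^2\to S^2$ is just $\Omega S^2$, not $\Omega^2S^2\times\Omega S^2$.

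The splitting as right $G(\A)$-modules is likewise not justified. $\Omega S^2\simeq S^1\times\Omega S^3$ splits a space and says nothing about whether the principal $G(\A)$-fibration $L_\A(M)\to\bB(M,\A,*)$ is trivial. Your fallback, building a section $\Omega S^2\to L_\A$ and checking homology, is left unspecified. The missing idea is that a \emph{retraction} onto $G(\A)$ is readily available. Forgetting $y$ gives a map of right $\A$-modules $\D^*(e)\to\D^*(1)\subset\D(1)\simeq\A$ (cf.\ Remark~\ref{remark:cospan}). Hence $\bB(M,\A,*)\to\bB\A$ factors through $\bB(\A,\A,*)\simeq *$, and Lemma~\ref{lem:factorization} gives $L_\A(\D^*(e))\simeq G(\A)\times\bB(\D^*(e),\A,*)$ as $G(\A)$-modules.

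With this, the theorem reduces to $\bB(\D^*(1),\A,*)\simeq\Omega S^2$ plus your covering argument. The paper applies that covering argument to $\bB(-,\A,*)$ rather than to $L_\A$, so no separate check of the cover on the $G(\A)$ factor is needed.
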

\begin{remark}
\label{rem:OmegaS2_splits}
We note that $\Omega S^2$ is homotopy equivalent to the product $S^1 \times \Omega S^3$. In fact, we can associate with the Hopf fibration the following homotopy fiber sequence:
\[
 \Omega S^3 \to \Omega S^2 \to S^1
\]
and the canonical map $S^1 \stackrel{\Omega \Sigma}{\longrightarrow} \Omega S^2$ gives a homotopy section. Using the structure of group-like $E_1$-algebra on $\Omega S^2$, we obtain an equivalence of spaces between $S^1 \times \Omega S^3$ and $\Omega S^2$.
As a consequence, all connected finite-degree coverings of $\Omega S^2$ are homotopy equivalent to $\Omega S^2$.
\end{remark}

\subsection*{Conventions}
Unless otherwise specified, all categories are assumed to be $\infty$-categories. The category $\cS$ denotes the category of spaces. We will use the word ``monoid'', ``group'' and ``module'' to refer to an $E_1$-algebra, a group-like $E_1$-algebra and an $E_1$-module (over a given $E_1$-algebra) in $\cS$, respectively.

\subsection*{Acknowledgments}

The authors would like to thank Rachael Boyd 
for her valuable contribution in the early stages of this project, and for her feedback on the first draft.
The authors also thank Ulrike Tillmann for useful discussions.
\section{Preliminaries and background}

\subsection{Complex  braid groups and configuration spaces}
\label{sec:complex_braid_and_conf}
Complex braid groups are the  generalizations of braid groups associated to arbitrary (finite) complex reflection groups. By a finite complex reflection group~$W$, we mean a finite subgroup of some $GL_n(\mathbb{C})$ generated by pseudo-reflections; that is, finite-order endomorphisms of $GL_n(\mathbb{C})$ which leave invariant some hyperplane in $\mathbb{C}^n$. 

The classification of irreducible finite complex reflection groups was first given by Shephard
and Todd~\cite{ShephardTodd}: there is an infinite family $G(de,e,n)$, where $d,e,n$ are positive integers, and 34 exceptional groups, denoted by $G_4,\dots, G_{37}$. The infinite family $G(de,e,n)$ includes the four infinite families $A_{n-1}$, $C_n$, $D_n$ and $I_2(e)$ of finite Coxeter groups, given by $G(1,1,n)$, $G(2,1,n)$, $G(2,2,n)$ and $G(e,e,2)$, respectively.  

We can associate to $W$ a braid group~$B(W)$. If we denote by $\mathcal{H}$ the corresponding collection of hyperplanes defined by $W$, the group $W$ acts freely on the hyperplane complement $X=\mathbb{C}^n\setminus\mathcal{H}$ (see \cite{Steinberg64}). The complex braid  group~$B(W)$ is defined as the fundamental group~$\pi_1(X/W)$ (\cite{BMR98}).  
It turns out that the complement $X$ is aspherical, and hence so is $X/W$. This result is due to many authors, in particular it is due to Fadell and Neuwirth \cite{FN62} for type $A_n$, to Brieskorn \cite{Brieskorn73} for type $C_n$ and $D_n$, to Nakamura \cite{Nak83} for the infinite series of complex reflection groups. As a consequence the quotient $X/W$ is a classifying space for the complex braid group $B(W)$.

In the follow-up, we shall  denote by $B(de,e,n)$ the braid group associated with $G(de,e,n)$.

While the groups $G(de,e,n)$ for distinct $d,e,n$ are pairwise non-isomorphic, for $d>1$ we have $B(2e,e,n) \simeq B(de,e,n)$ (see \cite[Prop.3.8]{BMR98}).

For completeness, we briefly recall the definition of the group $G(de,e,n)$. Let $d,e,n$ be positive integers and let $(z_1,\dots,z_n)$ be in $\mathbb{C}^n$. Following \cite{BMR98}, we denote by $G(de,e,n)$ the subgroup of $GL_n(\mathbb{C})$ consisting of the elements
\[
[\underline{a},\sigma]\colon z_j\mapsto a_j z_{\sigma(j)},
\]
where $\sigma\in\Sigma_n$ is a permutation, and $\underline{a}=(a_1,\dots,a_n)$, with  $a_j\in \mathbb{C}$ for all $j$,  satisfying $a_j^{de}=1$ and $(a_1\cdots a_r)^e=1$. 

For $d=1$, the complex braid group $B(e,e,n)$ has various presentations~\cite{BMR98, CorranPicantin}, but we shall not use them. 

We next introduce configuration spaces corresponding to the infinite family of the aforementioned complex reflection groups.
\begin{definition}
For $n\ge0$ we denote by $\Conf_n$ the space of unordered configurations of $n$ points in $\bC$:
\[
\Conf_n=\left\{(z_1,\dots,z_n)\in\bC\ |\ z_i\neq z_j\ \forall i\neq j\right\}/S_n.
\]
For $e\ge1$ and $n\ge0$ we denote by $\Conf_n(e)$ and $\Conf_n^*(e)$ the spaces
\[
\Conf_n(e)=\left\{(\{z_1,\dots,z_n\},y)\in\Conf_n\times\bC\ |\  y^e=\prod_{i=1}^nz_i\right\}
\]
and
\[
\Conf_n^*(e)=\left\{(\{z_1,\dots,z_n\},y)\in\Conf_n\times\bC^*\ |\  y^e=\prod_{i=1}^nz_i\right\}.
\]

Note that for $e=1$ we have $\Conf_n\cong\Conf_n(1)$.
We denote by $\A:=\coprod_{n\ge0}\Conf_n$, by $\D(e):=\coprod_{n\ge0}\Conf_n(e)$ and by $\D^*(e):=\coprod_{n\ge0}\Conf_n^*(e)$.

We have stabilization maps $\Conf_n(e)\to\Conf_{n+1}(e)$ and $\Conf^*_n(e)\to\Conf^*_{n+1}(e)$ (see Section \ref{sec:setting_modules} for details) and we denote by $\Conf_\infty(e)$ the space $\colim_{n\to\infty}\Conf_n(e)$ and by $\Conf^*_\infty(e)$ the space $\colim_{n\to\infty}\Conf_n^*(e)$.
\end{definition}

For $n\ge1$ the following hold:
\begin{itemize}
\item the space $\Conf_n$ is a classifying space for the Artin group of type $A_{n-1}$;
\item the space $\Conf_n^*(2)$ is a classifying space for the Artin group of type $C_n$;
\item the space $\Conf_n(2)$ is a classifying space for the Artin group of type $D_n$; 
\item 
the space $\Conf_n(e)$ is a classifying space for the complex braid group of type 
$B(e,e,n)$ and the space $\Conf_n^*(e)$ is a classifying space for the complex braid group of type 
$B(2e,e,n)$ according to the notation in \cite{BMR98};
\item 
homological stability holds for the sequence of complex braid groups of type $B(e,e,n)$, for fixed $e$ and increasing $n$ (see \cite[Thm.~8]{Brieskorn73} for the case $e=2$ and \cite{CalSal20} for the general case);
\item homological stability holds for the sequence of complex braid groups of type $B(2e,e,n)$, for fixed $e$ and increasing $n$ (see \cite[Thm.~8]{Brieskorn73} for the case $e=1$ and \cite{CalMar11} for the general case).
\end{itemize}
For $n=0$ we use the convention that $\Conf_n(e)\cong\Conf_n^*(e)$ is the disjoint union of $e$ points, corresponding to letting $y$ be one of the $e$\textsuperscript{th} roots of 1.

As we will see in Proposition \ref{prop:DdmoduleoverA}, the space $\A$ has a structure of $E_1$-algebra\footnote{In fact, this even extends to an $E_2$-algebra structure, which can further be identified with the free $E_2$-algebra generated by a point; in this article we shall only need the $E_1$-algebra structure on $A$.}, whereas for all $e\ge1$ the space $\D(e)$ has a structure of right $E_1$-module over $\A$. In both cases, multiplication is given by juxtaposition of configurations of points in $\bC$.

\subsection{Localization of monoids}

Recall that for a monoid $A$ the group completion $G(A)$ also inherits the structure of a monoid,  which is well-defined up to a contractible space of choices. 
\begin{lemma}
\label{lem:localization_via_fiber}
Let $A$ be a monoid in spaces and let $M$ be a right $A$-module. Then there is a chain of equivalences of spaces
\[
L_A(M)\simeq \bB(M,A,G(A))\simeq \fib(\bB(M,A,*)\to \bB A).
\]
\end{lemma}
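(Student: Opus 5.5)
Recall that $L_A(M)$ is defined as the localization of the right $A$-module $M$ at all elements of $A$. Equivalently, it is the base change $M\otimes_A G(A)$ along the monoid map $A\to G(A)$, i.e. the value on $M$ of the left adjoint to restriction of modules from $G(A)$ to $A$. The plan is to establish the two equivalences separately.

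\emph{First equivalence.} In spaces, the relative tensor product of a right $A$-module $M$ with a left $A$-module $N$ is computed by the two-sided bar construction $\bB(M,A,N)$, the geometric realization of the simplicial space $[p]\mapsto M\times A^{p}\times N$. Taking $N=G(A)$ with the left $A$-action induced from $A\to G(A)$, we get $\bB(M,A,G(A))\simeq M\otimes_A G(A)$, and this carries a residual right $G(A)$-action. It remains to check that this is the localization. The module $M\otimes_A G(A)$ is a $G(A)$-module, so every element of $A$ acts invertibly on it, since it acts through $G(A)$. Moreover, for any $A$-module $P$ on which $A$ acts invertibly, the $A$-action extends essentially uniquely to a $G(A)$-action. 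This holds because $G(A)$ is the universal group-like monoid under $A$, and $\mathrm{End}(P)^\times$ is group-like. Given this extension, $\map_A(M,P)\simeq\map_{G(A)}(M\otimes_A G(A),P)$ by the base-change adjunction, which is exactly the universal property of $L_A(M)$. I expect this identification of "local $A$-modules" with $G(A)$-modules to be the step needing the most care. I would cite it as the standard fact that $\Mod_{G(A)}\to\Mod_A$ is fully faithful with essential image the local modules, a consequence of $\bB A\simeq\bB G(A)$. Indeed, modules over a monoid whose action is invertible correspond to local systems on $\bB A$.

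\emph{Second equivalence.} Consider the map of bar constructions $\bB(M,A,G(A))\to\bB(*,A,G(A))$ induced by $M\to *$. Its target $\bB(*,A,G(A))$ is contractible: it is equivalent to $\bB(*,G(A),G(A))\simeq *$, using $\bB(*,A,G(A))\simeq *\otimes_A G(A)$ together with the group-completion identification $\bB A\simeq\bB G(A)$. Alternatively, and more directly, I would use the fibration-sequence argument. Levelwise we have maps
\[
M\times A^p\times G(A)\to M\times A^p\to A^p,
\]
forming a map of simplicial spaces $\bB_\bullet(M,A,G(A))\to\bB_\bullet(M,A,*)\to\bB_\bullet(*,A,*)$. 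Levelwise this is a fiber sequence with fiber $G(A)$ over each simplex. By the Bousfield--Friedlander/Puppe realization lemma, the realization is a fiber sequence provided the face maps act on the fiber $G(A)$ by equivalences. The only nontrivial face is $d_p$, which acts on $G(A)$ by left multiplication by an element of $A$. This is an equivalence since $G(A)$ is group-like; this is precisely why we tensor with $G(A)$ rather than with $A$. The realization lemma also requires the base $\bB_\bullet(*,A,*)$ to have $\pi_0$ behaving well, and this is the condition in the lemma. Realizing then gives a fiber sequence
\[
\bB(M,A,G(A))\to\bB(M,A,*)\to\bB A,
\]
where $\bB(*,A,G(A))\to\bB A$ identifies with the path fibration $\bB A\times_{\bB A}*$. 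This yields $\bB(M,A,G(A))\simeq\fib(\bB(M,A,*)\to\bB A)$.

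The main obstacle is applying the realization (quasifibration) lemma correctly, namely checking that all face maps induce equivalences on fibers. As noted above, this reduces to the invertibility of the $A$-action on $G(A)$, which holds by construction.
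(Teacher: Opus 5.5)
Your first equivalence is the paper's argument: local $A$-modules are $G(A)$-modules, so $L_A\simeq -\otimes_A G(A)$, and this is modelled by the two-sided bar construction.

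For the second equivalence you take a different route. The paper never runs a realization argument over the non-group-like $A$. It applies May's theorem for the group-like monoid $G(A)$ to the $G(A)$-module $N=\bB(M,A,G(A))$, getting $N\simeq\fib(\bB(N,G(A),*)\to\bB G(A))$. It then uses associativity, $\bB(\bB(M,A,G(A)),G(A),*)\simeq\bB(M,A,\bB(G(A),G(A),*))\simeq\bB(M,A,*)$, together with $\bB G(A)\simeq\bB A$. All quasifibration input is thus outsourced to May's group-like setting. The only contractibility used is $\bB(G,G,*)\simeq *$, which holds for any monoid.

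Your direct route can be made to work, but as written its key step is misstated.
\begin{itemize}
\item The sequence $M\times A^p\times G(A)\to M\times A^p\to A^p$ is not levelwise a fiber sequence: the fiber of the second map is $M$. What you actually have is that $\bB_\bullet(M,A,G(A))\to\bB_\bullet(M,A,*)$, and likewise its $M=*$ version, is levelwise a projection with fiber $G(A)$. Your computation of $d_p$ shows it is a cartesian (equifibered) transformation.
\item The realization lemma then says that the square with top row $\bB(M,A,G(A))\to\bB(*,A,G(A))$ and bottom row $\bB(M,A,*)\to\bB A$ is a pullback. It does not by itself give the displayed fiber sequence. To conclude you need $\bB(*,A,G(A))\simeq *$, so this is a necessary input, not an ``alternative''. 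It follows from your first part: $*$ is local, hence $\bB(*,A,G(A))\simeq L_A(*)\simeq *$.
\item Do not invoke Bousfield--Friedlander. Its hypotheses (the $\pi_*$-Kan condition and the $\pi_0$-fibration condition) fail in the case of interest, e.g.\ $A=\A$ with $\pi_0(\A)\cong\bN$. The lemma has no hypothesis that could supply them, contrary to your remark that ``this is the condition in the lemma''. The right tool is the hypothesis-free fact that a cartesian transformation of simplicial spaces has cartesian realization squares (Puppe's theorem, equivalently descent in $\cS$).
\end{itemize}

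With these repairs your argument is more self-contained than the paper's and shows exactly where group-likeness of the coefficient $G(A)$ enters, namely in the $d_p$ face. The cost is that your second equivalence now depends on the identification of local modules from the first. The paper's version needs only the automatic contractibility of $\bB(G(A),G(A),*)$.
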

\begin{proof}
Let $\Mod^{\mathrm{loc}}_{A}$ be the full subcategory of $\Mod_{A}$ spanned by local objects. Note first that we have an equivalence $\Mod^{\mathrm{loc}}_{A}\simeq \Mod_{G(A)}$, by definition of $G(A)$. Moreover, the full subcategory inclusion can be identified with the restriction  functor $\Mod_{G(A)}\to \Mod_{A}$. Passing to left adjoints, the localization functor $L_A\colon\Mod_A \to\Mod^{\mathrm{loc}}_A$ can be identified with the relative tensor product $-\otimes_{A}G(A)\colon\Mod_A\to\Mod_{G(A)}$, which can be concretely modelled by the two-sided bar construction~$\bB(-,A,G(A))$ (see e.g.~\cite[Proposition~4.6.2.17]{lurie2017higher} 
or \cite[Lemma~2.2.6]{francis2008derived}). The first equivalence follows. 

For the second equivalence, we first recall that 
May proves that for a group-like $G$ and a $G$-module $M$, we have an equivalence $M\simeq \fib(\bB(M,G,*)\to\bB G)$ as $G$-modules (see~\cite[Theorem 7.6 and Proposition 7.9]{May75}). Applying this to $M=\bB(M,A,G(A))$ and $G=G(A)$ we get
\[
\bB(M,A,G(A))\simeq\fib(\bB(\bB(M,A,G(A)),G(A),*)\to\bB G(A)).
\]
Now, associativity of the bar construction yields
\[
\bB(\bB(M,A,G(A)),G(A),*)\simeq\bB(M,A,\bB(G(A),G(A),*))\simeq \bB(M,A,*),
\]
and the equivalence $\bB G(A)\simeq\bB A$ allows us to write
\[
\bB(M,A,G(A))\simeq\fib(\bB(M,A,*)\to\bB A),
\]
concluding the chain of equivalences in the statement.
\end{proof}

To connect $L_A(M)\simeq \bB(M,A,G(A))$ with the mapping telescope we are actually interested in -- in that it contains information about the stable homology of components of $M$ -- we will also use the following lemmas.
\begin{lemma}
\label{lem:localization_quillenization}
Let $A$ be a monoid with $\pi_0(A)\cong\bN$, and let $a$ be a point in the component $A_1$ of $A$ corresponding to $1\in\bN$. Assume that the group-completion Theorem \cite{McDuffSegal} applies to $A$, and in particular that $G(A)$ is equivalent, as a space, to the quillenization of the mapping telescope
\[
\colim(A\xrightarrow{\_\cdot a}A\xrightarrow{\_\cdot a}A\xrightarrow{\_\cdot a}\dots).
\]
Then the canonical map 
\[
\colim(M\xrightarrow{\_\cdot a}M\xrightarrow{\_\cdot a}M\xrightarrow{\_\cdot a}\dots)\longrightarrow L_A(M)
\]
induces an equivalence under quillenization.
\end{lemma}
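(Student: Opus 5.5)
The plan is to compare homology. Since quillenization is determined (up to equivalence) by the homology-equivalence it induces, it is enough to show two things: the map from the telescope to $L_A(M)$ induces an isomorphism on integral homology, and $L_A(M)$ has the property that a homology iso into it is a quillenization, namely each component is a simple space (in fact $L_A(M)$ is a $G(A)$-module, hence its components are permuted by $\pi_0 G(A)\cong\bZ$ and homotopy-equivalent to each other, and the fundamental group acts nilpotently/trivially on homology). Concretely I would write $M_\infty=\colim(M\xrightarrow{\cdot a}M\to\dots)$ and compare it with $A_\infty=\colim(A\xrightarrow{\cdot a}\dots)$ via the bar construction.

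\textbf{Step 1.} Use Lemma~\ref{lem:localization_via_fiber} to write $L_A(M)\simeq \bB(M,A,G(A))$. By the group-completion theorem, $H_*(G(A))\cong H_*(A)[\pi_0^{-1}]\cong H_*(A_\infty)$, as right $H_*(A)$-modules. I would consider the simplicial space $\bB_\bullet(M,A,A_\infty)$, where $A_\infty$ is regarded as a left $A$-module via the telescope (this needs a homotopy-coherent left action on the telescope; one can instead model with a homotopy-commutativity argument, using that $A$ is $E_2$ in our cases, or by taking the telescope of $\bB(M,A,A)\simeq M$ along right multiplication by $a$ on the last factor). Then $\colim_n \bB(M,A,A)\simeq M_\infty$, and there is a map $M_\infty\to \bB(M,A,G(A))$ induced levelwise by $A_\infty\to G(A)$.

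\textbf{Step 2.} Levelwise, $M\times A^{k}\times A_\infty\to M\times A^k\times G(A)$ is a homology isomorphism by the group-completion theorem and the Künneth theorem (over a field, then integrally via universal coefficients, or directly with $\bZ$ using Künneth short exact sequences and the five lemma). Geometric realization of simplicial spaces preserves levelwise homology isomorphisms (spectral sequence of the skeletal filtration). Hence $M_\infty\to L_A(M)$ is a homology isomorphism.

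\textbf{Step 3.} Deduce the statement: since $L_A(M)$ is a module over the group $G(A)$ and the map $M_\infty\to L_A(M)$ is a homology equivalence, the universal property of the plus construction (with respect to the perfect normal subgroup of each component's $\pi_1$ killed by the map, which is the commutator subgroup in the applications) identifies $L_A(M)$ with $M_\infty^+$ provided the components of $L_A(M)$ have abelian fundamental group acting suitably; alternatively, I would phrase ``equivalence under quillenization'' as exactly the homology statement and stop after Step 2. The main obstacle I expect is Step 1: making rigorous that the telescope commutes with the bar construction, i.e. that right multiplication by $a$ on $A$ is compatible with the left $A$-action so that $A_\infty$ is a left $A$-module. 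I would handle this by performing the telescope on the right in $\bB(M,A,A)$ with $a$ acting via the right $A$-module structure of $A$ itself, which is strictly compatible with the left action, so no commutativity is needed.
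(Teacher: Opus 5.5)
\textbf{There is a genuine gap, in Step 3.} Your Steps 1--2 are sound and run parallel to the paper. Telescoping on the right in $\bB(M,A,A)$ is exactly the exchange of colimits over $\bN_{\le}$ and $\Delta^{\op}$ that the paper uses. You therefore correctly obtain that $\Tel_a(M)\to L_A(M)$ is an integral homology isomorphism.

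The problem is the principle you rely on in Step 3: that an integral homology isomorphism into $L_A(M)$, together with some simplicity of the target, yields an equivalence after quillenization. This is false. Take the Baumslag--Solitar group $\Gamma=\langle a,t\mid tat^{-1}=a^2\rangle\cong\bZ[1/2]\rtimes\bZ$.
\begin{itemize}
\item The map $\bB\Gamma\to S^1$ sending $t\mapsto 1$, $a\mapsto 0$ is an integral homology isomorphism.
\item Both spaces are their own quillenizations, since $\Gamma$ is solvable and so has no non-trivial perfect subgroups.
\item The target is even a topological group.
\item Yet the map is not an equivalence.
\end{itemize}
What is actually needed is that the map be \emph{acyclic}, i.e.\ a homology isomorphism for all local coefficient systems pulled back from the target. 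The perfectness of the kernel on $\pi_1$ that you invoke is precisely what your argument never establishes.

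The simplicity you assert also fails for a general module. For $M=N\times A$, with $A$ acting on the second factor, one has $L_A(M)\simeq N\times G(A)$. Its components can have arbitrary non-abelian fundamental group, and they need not be equivalent to each other. Your fallback, reading ``equivalence under quillenization'' as a homology statement, proves something strictly weaker. That weaker statement would not give the homotopy-type identification in Theorem~\ref{thm:0}.

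\textbf{The fix.} Replace ``homology isomorphism'' throughout by ``map inverted by quillenization''. Use the hypothesis in the form that the canonical map $\Tel_a(A)\to G(A)$ is inverted by quillenization, not merely that the two spaces have isomorphic homology.
\begin{itemize}
\item Quillenization is a localization of spaces that preserves finite products. Hence each level $M\times A^p\times\Tel_a(A)\to M\times A^p\times G(A)$ is inverted by it.
\item A localization satisfies $(\colim X_\bullet)^+\simeq(\colim X_\bullet^+)^+$. Hence the induced map $\bB(M,A,\Tel_a(A))\to\bB(M,A,G(A))$ is inverted as well.
\item Combined with your identification $\bB(M,A,\Tel_a(A))\simeq\Tel_a(M)$, this is exactly the paper's proof.
\end{itemize}
Alternatively, you could keep your homological framework and upgrade Step 2 to homology with arbitrary local coefficients pulled back from $L_A(M)$, using the skeletal spectral sequence. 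That shows $\Tel_a(M)\to L_A(M)$ is acyclic, hence a quillenization-equivalence. This route also needs acyclicity of $\Tel_a(A)\to G(A)$ as input.
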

\begin{proof}
Denoting by $\Tel_a(-)$ the mapping telescope of a right $A$-module by the iterated action of $a$, and using the first equivalence in Lemma~\ref{lem:localization_via_fiber}, we have a chain of equivalences
\[
\begin{split}
L_A(M)^+&\simeq \bB(M,A,G(A))^+\simeq \bB(M,A,\Tel_a(A)^+)^+\overset{\star}{\simeq}\bB(M,A,\Tel_a(A))^+\\
&\overset{\star\star}{\simeq}\Tel_a(\bB(M,A,A))^+\simeq\Tel_a(M)^+.
\end{split}
\]
In particular, the equivalence $\star$ follows from the fact that quillenization preserves homotopy colimits and finite products, so that both sides agree with the colimit for $[p]\in\Delta^{\mathrm{op}}$ of $M^+\times (A^+)^p\times\Tel_a(A)^+$. The equivalence $\star\star$ follows from exchanging homotopy colimits over $\bN_{\le}$ and $\Delta^{\mathrm{op}}$.
\end{proof}

\begin{lemma}\label{lem:factorization}
Let $A$ be a monoid in spaces and let $M$ be a right $A$-module. Assume that there exists a map $M\to A$ of right $A$-modules. Then there is an equivalence $L_A(M)\simeq G(A)\times \bB(M,A,*)$ of right $G(A)$-modules.
\end{lemma}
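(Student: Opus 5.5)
We plan to obtain the splitting by comparing two fiber sequences, using the map $f\colon M\to A$ of right $A$-modules to supply a retraction. By Lemma~\ref{lem:localization_via_fiber} we have $L_A(M)\simeq\fib(\bB(M,A,*)\to\bB A)$, where the map $p\colon\bB(M,A,*)\to\bB A=\bB(*,A,*)$ is induced by the terminal map $M\to *$. The module map $f$ induces $\bB(f,A,*)\colon\bB(M,A,*)\to\bB(A,A,*)\simeq *$. That map carries no information, so instead we apply the first description $L_A(M)\simeq\bB(M,A,G(A))$ and use $f$ to produce a map of right $G(A)$-modules
\[
\phi\colon \bB(M,A,G(A))\xrightarrow{\bB(f,A,G(A))}\bB(A,A,G(A))\simeq G(A).
\]
The second map needed is the projection $\bB(M,A,G(A))\to\bB(M,A,*)$ induced by $G(A)\to *$; this is a map of spaces, equivariant for the trivial action on the target.

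Combining these gives a map
\[
\Psi=(\phi,q)\colon L_A(M)\simeq\bB(M,A,G(A))\longrightarrow G(A)\times\bB(M,A,*).
\]
Here $G(A)$ acts on the target through the first factor, so $\Psi$ is a map of right $G(A)$-modules. The remaining point is to show that $\Psi$ is an equivalence. Our plan is to check this on the underlying spaces, since a map of modules over a group that is an equivalence of spaces is an equivalence of modules. For that we set up a map of fiber sequences. The May-type fiber sequence $G(A)\to\bB(M,A,G(A))\to\bB(M,A,*)$ is the one used in the proof of Lemma~\ref{lem:localization_via_fiber}, the fiber taken over a point of $\bB(M,A,*)$ via the $G(A)$-principal structure. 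It maps to the trivial fibration $G(A)\to G(A)\times\bB(M,A,*)\to\bB(M,A,*)$. On base spaces the map is the identity. On fibers, the composite $G(A)\to\bB(M,A,G(A))\xrightarrow{\phi}G(A)$ is right multiplication by the image under $f$ of the chosen basepoint $m\in M$ (more precisely, by its image in $G(A)$). Since $G(A)$ is group-like, this composite is an equivalence. The five lemma on homotopy groups over each component, or directly fiberwise equivalence over the common base, then shows that $\Psi$ is an equivalence.

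The main obstacle is identifying the fiber of $\bB(M,A,G(A))\to\bB(M,A,*)$ with $G(A)$ coherently as a right $G(A)$-torsor. This means showing that $\bB(M,A,G(A))\to\bB(M,A,*)$ is a principal $G(A)$-bundle, i.e.\ the pullback of $EG(A)\to\bB G(A)\simeq\bB A$ along $p$. We would deduce this from the equivalence $\bB(M,A,G(A))\simeq\fib(p)$ already established in Lemma~\ref{lem:localization_via_fiber}, rewritten as the pullback of the path fibration over $\bB A$, which is a principal $\Omega\bB A\simeq G(A)$-fibration. Fiberwise, the map $\phi$ is then the $G(A)$-equivariant map from a torsor to $G(A)$, and any such map is automatically an equivalence, which settles the fiber comparison. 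Some care is needed with the components of $\bB(M,A,*)$ when it is disconnected. The torsor argument works over each component separately, so this causes no difficulty.
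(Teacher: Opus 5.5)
Your argument is correct, but you verify the splitting by a different route than the paper. The paper stays entirely within the fiber description $L_A(M)\simeq\fib(p)$, where $p\colon\bB(M,A,*)\to\bB A$. It observes that $p$ factors as $\bB(M,A,*)\xrightarrow{\bB(f,A,*)}\bB(A,A,*)\to\bB(*,A,*)$. Since $\bB(A,A,*)\simeq *$, this exhibits $p$ as null-homotopic, and pasting pullbacks gives $\fib(p)\simeq\bB(M,A,*)\times\Omega\bB A\simeq \bB(M,A,*)\times G(A)$, with $G(A)$ acting on the loop factor.

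So the map $\bB(f,A,*)$ that you dismiss as carrying no information is exactly the paper's key input: the fact that it lands in a contractible space is what makes the argument two lines long. In fact, unwinding the pasting $\fib(p)\simeq\bB(M,A,*)\times_{\bB(A,A,*)}\fib(\bB(A,A,*)\to\bB A)$, and using $\fib(\bB(A,A,*)\to\bB A)\simeq\bB(A,A,G(A))\simeq G(A)$ together with the naturality of Lemma~\ref{lem:localization_via_fiber} in $M$, the paper's equivalence is your map $\Psi=(\phi,q)$. The two proofs differ in how they show it is an equivalence. The paper notes that a pullback over a contractible space is a product. You instead identify $q$ as a principal $G(A)$-fibration, via the equivariant form of Lemma~\ref{lem:localization_via_fiber} and the check that $q$ is the fiber projection. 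You then use that a $G(A)$-map from a torsor to $G(A)$ is an equivalence.

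Your version has the merit of presenting the splitting explicitly in bar-construction terms, with $\bB(f,A,G(A))$ as the $G(A)$-component. It costs more bookkeeping: torsor structure on the fibers, equivariance of the induced fiber maps, and components of the base. The paper's formal argument avoids all of this.

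One small slip: on the fiber over $[m]$ the composite is $h\mapsto f(m)h$. This is left multiplication, as it must be for a map of right $G(A)$-modules, not right multiplication. It is an equivalence either way, since $G(A)$ is group-like.
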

\begin{proof}
The hypothesis implies that the map $M\to*$ of right $A$-modules factors as a composite $M\to A\to *$. We may therefore compute a chain of equivalences of right $G(A)$-modules as follows:
\[
\begin{split}
L_A(M)&\simeq\fib(\bB(M,A,*)\to \bB(*,A,*))\\
&\simeq \fib(\bB(M,A,*)\to \bB(A,A,*)\to \bB(*,A,*))\\
&\simeq\fib(\bB(M,A,*)\to *\to \bB(*,A,*))\\
&\simeq\Omega \bB A\times \bB(M,A,*)\simeq G(A)\times \bB(M,A,*),
\end{split}
\]
concluding the proof.
\end{proof}

\section{Setting the module structures}\label{sec:setting_modules}
In this section we show that $\A:=\coprod_{n\ge0}\Conf_n$ and $\D^*(e):=\coprod_{n\ge0}\Conf_n^*(e)$ have the structure of  monoids, that $\D(e):=\coprod_{n\ge0}\Conf_n(e)$ is  both an  $\A$-module and a $\D^*(e)$-module, and that these two module structures are compatible. 

\begin{notation}
In this section we write $\bz$ for a generic configuration $\{ z_1, \ldots, z_n\} \in \Conf_n$.
\end{notation}
\begin{notation}
For a subspace $X\subseteq \bC$, we denote by $\Conf_n(X)$ the subspace of $\Conf_n$ comprising configurations $\bz$ such that $z_i\in X$ for all $i$.
\end{notation}
\begin{definition}
Given $t \in \bR_{\geq 0}$, we define the continuous injective map $\tau_t: \bC^* \to \bC^*$:
\[
\tau_t(z) := z \frac{|z|+t}{|z|}.
\]
For every cone $X\subseteq \bC^*$, i.e.~every subset closed under multiplication by $\bR_{>0}$, the map $\tau_t$ induces a self map on $\Conf_n(X)$ given by the formula
\[\bz \mapsto \tau_t(\bz) = \left\{z_1\frac{|z_1|+t}{|z_1|},\dots,z_n\frac{|z_n|+t}{|z_n|}\right\}.\]
\end{definition}
\begin{definition}
\label{defn:Y}
We denote by
\[Y_e\colon\Conf_m(\bR_{>0}\times \mathrm{i}\bR)\to\bC\]
the unique continuous function 
satisfying $Y_e(\{z_1,\dots,z_n\})^e=\prod_{i=1}^nz_i$ and attaining positive real values on $\Conf_n(\bR_{>0})$.
\end{definition}

We  prove the following preliminary proposition.
\begin{proposition}\label{prop:DdmoduleoverA}
There is a monoid structure on $\A$ and a right $\A$-module structure on $\D(e)$.
\end{proposition}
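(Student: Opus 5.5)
The plan is to replace $\A$ and $\D(e)$ by homotopy equivalent rectified models on which juxtaposition is strictly associative, in the style of Moore loops. For $\A$, I take $\A'=\coprod_n\{(\bz,t)\in\Conf_n\times\bR_{\ge0}\ :\ \bz\subset \{0<\mathrm{Re}<t\}\}$, with product $(\bz,t)\cdot(\bz',t')=(\bz\cup(\bz'+t),t+t')$. This is a strictly associative topological monoid with unit $(\emptyset,0)$. The map forgetting $t$ is an equivalence on each component, because the space of admissible $t$ for a fixed configuration is a nonempty interval once one first retracts $\Conf_n$ onto $\Conf_n(\{\mathrm{Re}>0\})$ by translation. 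This gives the monoid (i.e.\ $E_1$) structure on $\A$.

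For $\D(e)$, the difficulty is that $y$ depends on $\prod z_i$, and translating points changes the product. So the module action must be defined on configurations whose points stay away from $0$ in a controlled way. This is where the cone $\bR_{>0}\times\mathrm{i}\bR$, the maps $\tau_t$ and the functions $Y_e$ of Definition~\ref{defn:Y} come in. I model $\D(e)$ by the space $\D'(e)$ of triples $(\bz,y,t)$, where $(\bz,y)\in\Conf_n(e)$, $t\ge0$, and all points $z_i$ satisfy $|z_i|<t$ (or lie in a disc of radius $t$). I then set
\[
(\bz,y,t)\cdot(\bw,s)=\bigl(\bz\cup \tau_t\text{-shifted }(\bw+\ldots),\ y\cdot Y_e(\text{new points}),\ t+s\bigr).
\]
Concretely, I first translate $\bw\subset\{0<\mathrm{Re}<s\}$ so that it lies in the right half-plane cone, and then push it radially outward by $\tau_t$ so that it lies outside the disc of radius $t$. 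Its product of coordinates is then nonzero and admits the canonical $e$-th root $Y_e$, since the image lies in $\bR_{>0}\times \mathrm{i}\bR$. The new $y$ is $y\cdot Y_e(\text{image of }\bw)$, which satisfies the defining equation.

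Associativity reduces to two facts. First, $\tau_t\circ\tau_{t'}=\tau_{t+t'}$, which is immediate from the formula. Second, $Y_e$ is multiplicative on disjoint unions of configurations in the cone, which holds by uniqueness, since both sides are continuous $e$-th roots that are positive on real configurations. The unit axiom is immediate. Finally, forgetting $t$ gives $\D'(e)\simeq\D(e)$ by the same interval-contractibility argument.

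The main obstacle I expect is making the geometry consistent. The translated-then-$\tau$-pushed copies of the two factors must stay disjoint from the old points and from each other, so that the result lies in the right module space with the correct new parameter. The bookkeeping must also be compatible with $\A'$'s own translation product. For this reason I may instead use $\tau$ for $\A'$ as well: I would realize $\A'$ on cones $X=\bR_{>0}\times\mathrm{i}\bR$, with product $\bz\cdot\bz'=\bz\cup\tau_{t}(\bz')$ for configurations in an annular region $\{|z|<t\}$. Then both the monoid and module structures are given by the same operation $\tau$, and associativity follows uniformly from $\tau_t\tau_{t'}=\tau_{t+t'}$.
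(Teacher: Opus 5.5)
Your closing paragraph is exactly the paper's proof. The paper sets
\[
\CConf_n=\{(\bz,t): \Re(z_i)>0,\ |z_i|<t\},\qquad (\bz,t)\cdot(\bz',t')=\bigl(\bz\cup\tau_t(\bz'),\,t+t'\bigr).
\]
It also sets $\CConf_n(e)=\{(\bz,y,t): |z_i|<t\}$, with no half-plane condition on the module's points, and action
\[
(\bz,y,t)\cdot(\bz',t')=\bigl(\bz\cup\tau_t(\bz'),\,y\cdot Y_e(\tau_t(\bz')),\,t+t'\bigr).
\]
The checks you list are the right ones. These are $\tau_t\circ\tau_{t'}=\tau_{t+t'}$; the identity $|\tau_t(z)|=|z|+t$, which gives disjointness from $\bz$ and the new bound $t+t'$; multiplicativity of $Y_e$ via your connectedness/uniqueness argument; and contractibility of the fibre $(\max_i|z_i|,\infty)$ of the map forgetting $t$. (Also, $\{|z|<t\}$ intersected with the cone is a half-disc, not an annulus.)

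You should, however, drop the strip model you present as the main line; it cannot be combined with a $\tau$-based action. First, the strip $\{0<\Re<s\}$ is unbounded in the imaginary direction. Hence $|\tau_t(w)|=|w|+t$ need not be $<t+s$, and the product leaves $\D'(e)$. Second, and more seriously, the action axiom fails strictly. In $(\bz,y,t)\cdot\bigl((\bw,s)\cdot(\bw',s')\bigr)$ the configuration $\bw'$ lands at $\tau_t(\bw'+s)$. In $\bigl((\bz,y,t)\cdot(\bw,s)\bigr)\cdot(\bw',s')$ it lands at $\tau_{t+s}(\bw')$. These disagree whenever $\bw'$ is off the positive real axis; for $w'=1+\mathrm{i}$ and $s=t=1$ the two image points have arguments $\arctan(1/2)$ and $\pi/4$. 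Horizontal translation and radial pushing do not commute. This is precisely why the monoid product must also be defined by $\tau$, as in the paper.
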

\begin{proof}
We replace $\Conf_n$ and $\Conf_n(e)$ by their ``Moore'' versions
\begin{align*}
\CConf_n&:=\left\{(\bz,t)\in\Conf_n\times\bR_{\ge0}\ |\ \forall i\ \Re(z_i)>0, |z_i| < t\right\}\simeq\Conf_n;\\
\CConf_n(e)&:=\left\{(\bz,y,t)\in\Conf_n(e)\times\bR_{\ge0}\ \left|\begin{array}{l}
\forall i \ |z_i| < t   \end{array} \right. \right\}\simeq\Conf_n(e). 
\end{align*}
We may now regard $\AA:=\coprod_{n\ge0}\CConf_n$ as a strictly associative and strictly unital topological monoid.  We do this by using the multiplication operation which is described as follows: 
\begin{align*}
\CConf_n \times \CConf_m & \to\  \CConf_{n+m},\\
(\bz,t),(\bz',t') &\mapsto\ \left(\bz \cup \tau_t(\bz'),t+t'\right).
\end{align*}

We may similarly turn $\DD(e):=\coprod_{n\ge0}\CConf_n(e)$ into a strictly associative right module over $\AA$. The formula for multiplication is similar and uses the function $Y_e$ from Definition \ref{defn:Y}:
\begin{align*}
\CConf_n(e) \times \CConf_m & \to\  \CConf_{n+m}(e),\\ 
(\bz,y,t),(\bz',t')&\mapsto\left(\bz \cup \tau_t(\bz'),y\cdot Y_e(\tau_t(\bz')),t+t'\right).
\end{align*}
\end{proof}
\begin{remark}
\label{remark:cospan}
We observe that there is a cospan of maps of (strict) right $\AA$-modules
\[
\DD(e)\to\DD(1)\stackrel{\simeq}{\hookleftarrow}\AA.
\]
The first map forgets $y$, the second is the natural inclusion, which is an  equivalence. We thus obtain a map of right $\A$-modules $\D(e)\to\A$ in $\Mod_\A$. 

Applying Lemma \ref{lem:factorization} we obtain an equivalence of right $G(\A)$-modules
\[
L_\A(\D(e)) \simeq G(\A) \times \bB(\D(e),\A,*).
\]
\end{remark}




\begin{definition} \label{def:HMapOfMonoids}
We denote by $ H\colon\AA\to\DD^{*}(e)$ the map of strict monoids given by 
\begin{align*}
\CConf_n &\to \CConf^{*}_{n}(e),\\
(\bz,t) & \mapsto(\bz,Y_e(\bz),t),
\end{align*}
where $Y_e$ is the function from Definition \ref{defn:Y}.
\end{definition}

The $\A$-module structure on $\D(e)$ extends to a $\D^*(e)$-module structure, given informally by concentric concatenation of configurations, in the following sense.

\begin{proposition}
\label{prop:D*}
There is a monoid structure on $\D^*(e)$ for which the following hold:
\begin{enumerate}
\item there is a $\D^*(e)$-module structure on $\D(e)$; 

\item 
the $\A$-module structure on $\D(e)$ obtained by restriction along $H\colon\A\to\D^*(e)$ agrees with the  $\A$-module structure on $\D(e)$ defined in Proposition \ref{prop:DdmoduleoverA}.
\end{enumerate}
\end{proposition}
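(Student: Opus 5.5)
The plan is to build everything strictly on "Moore" models, as in the proof of Proposition \ref{prop:DdmoduleoverA}, but with concentric rather than half-plane concatenation. The monoid $\D^*(e)$ will be modelled by configurations in an annulus, and acting by an element of it will mean inserting a configuration in a small disc around the origin. For this I introduce
\[
\DD^*(e)=\coprod_{n\ge0}\CConf^*_n(e),\qquad \CConf^*_n(e):=\{(\bz,y,t)\in\Conf^*_n(e)\times\bR_{\ge0}\ |\ \forall i\ |z_i|<t\}.
\]
This is equivalent to $\Conf^*_n(e)$ by the same radial rescaling argument. Given $(\bz,y,t)$ and $(\bz',y',t')$ in $\DD^*(e)$, I define the product by pushing the first factor outward past the second:
\[
(\bz,y,t)\cdot(\bz',y',t')=\bigl(\bz'\cup\tau_{t'}(\bz),\ y'\cdot \widetilde{Y}_e(\bz,t'),\ t+t'\bigr).
\]
Here $\widetilde Y_e(\bz,t')$ is the unique continuous choice of $e$-th root of $\prod_i\tau_{t'}(z_i)$ equal to $y$ at $t'=0$. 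It can also be written explicitly as $y\prod_i\bigl((|z_i|+t')/|z_i|\bigr)^{1/e}$ with positive real roots, since $\tau_{t'}$ only rescales moduli. The order of the factors (which one goes inside) has to be chosen so that the action on $\DD(e)$, the $\CConf_n$-analogue with points allowed at $0$, is compatible.

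The steps are as follows.

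First, I check that the product is well defined. All points of $\tau_{t'}(\bz)$ have modulus in $(t',t+t')$, and those of $\bz'$ have modulus $<t'$, so they are disjoint and nonzero. The $y$-coordinate satisfies the defining equation because $(y'\widetilde Y)^e=\prod z'_j\prod\tau_{t'}(z_i)$.

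Second, I check strict associativity and unitality. This reduces to the identity $\tau_{s}\circ\tau_{t}=\tau_{s+t}$ on $\bC^*$, which is immediate from the formula, together with multiplicativity of the explicit radial scaling factor for $\widetilde Y$. The unit is $(\emptyset,1,0)$.

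Third, I define the action of $\DD^*(e)$ on $\DD(e)$ by the same formula, with $(\bz',y',t')\in\DD(e)$ placed in the inner disc; points of $\bz'$ may include $0$, and this is harmless. Associativity follows by the same computation. This is also why I put the acting element outside.

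Fourth, for part (2), I realise the $\A$-action through $H$ and compare it with the half-plane action of Proposition \ref{prop:DdmoduleoverA}. This is the main obstacle: the two geometric pictures differ (juxtaposition in the right half-plane versus concentric insertion), so they will not agree on the nose. I would first try to replace $\AA$ by an equivalent strict monoid of configurations in a sector $\{\Re z>0\}$ with the radial Moore parameter; there $\tau_t$ already implements concatenation, exactly as in the definition of the multiplication of $\AA$. Then $H$ followed by the concentric action is literally $(\bz,y,t)\cdot H(\bz',t')$, with new points $\tau_t(\bz')$ outside and root $y\cdot Y_e(\tau_t(\bz'))$. That is the formula of Proposition \ref{prop:DdmoduleoverA} once the roles of inner and outer are matched.

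If the orientation conventions clash, I would instead exhibit a zig-zag of strict right $\AA$-module maps. These would go through the subspace of $\DD(e)$ where the Moore parameters are compatible, with each map an equivalence, and radial isotopies $\tau_s$ would give the homotopies. This yields agreement in $\Mod_\A$, which is all that is needed downstream.
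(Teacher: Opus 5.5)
Your construction is, up to the order of the factors, the paper's: the same Moore model $\CConf^*_n(e)$, concentric concatenation by $\tau_t$, the $y$-coordinate corrected by $\prod_j\sqrt[e]{(|z'_j|+t)/|z'_j|}$, and the action on $\DD(e)$ given by the same formula with the $\DD(e)$-element in the inner disc. However, you chose the opposite order of factors, and that choice breaks part (2) as stated.

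With your product the \emph{left} factor is pushed outward. So the action $(m,x)\mapsto m\cdot x$ with $x\in\DD(e)$ innermost satisfies $(m_1m_2)\cdot x=m_1\cdot(m_2\cdot x)$, which makes it a left action, not a right one. Moreover, $H$ is then not a monoid map. In $\AA$ the product $(\bz,t)\cdot(\bz',t')=(\bz\cup\tau_t(\bz'),t+t')$ pushes the \emph{right} factor outward. Hence, for your multiplication, $H(a)\cdot H(b)$ has the points of $a$ in the outer ring while $H(a\cdot b)$ has those of $b$ there; already one point in each factor gives different configurations. So ``restriction along $H$'' is not defined for your monoid structure. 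Your data only become the required ones after replacing your monoid by its opposite, which is exactly the paper's monoid. Your step 4 in fact silently switches to that convention: $(\bz,y,t)\cdot H(\bz',t')$ with $\tau_t(\bz')$ outside is the paper's formula, not the one from your first step.

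The fix is to push the right factor outward, both for the product on $\DD^*(e)$ and for the action on $\DD(e)$:
$(\bz,y,t)\cdot(\bz',y',t')=\bigl(\bz\cup\tau_t(\bz'),\,yy'\prod_j\sqrt[e]{(|z'_j|+t)/|z'_j|},\,t+t'\bigr)$. Then:
\begin{itemize}
\item $\DD(e)$ is a strict right $\DD^*(e)$-module.
\item $H$ is a strict monoid map, because on the right half-plane $Y_e$ is the product of principal $e$-th roots and hence multiplicative under disjoint union.
\item Part (2) holds on the nose, because $Y_e(\bz')\prod_j\sqrt[e]{(|z'_j|+t)/|z'_j|}=Y_e(\tau_t(\bz'))$: both sides are continuous in $t$, both are $e$-th roots of $\prod_j\tau_t(z'_j)$, and they agree at $t=0$.
\end{itemize}
This is why the paper settles (2) by direct inspection.

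What you call the main obstacle does not exist. $\AA$ is already the radial model: points in $\{\Re z>0,\ |z|<t\}$, concatenated by $\tau_t$. So the ``sector replacement'' you propose is $\AA$ itself, and there is no juxtaposition-versus-concentric discrepancy to bridge. Your fallback zig-zag of right $\AA$-module equivalences would not have helped anyway, since the actual defect is a left/right mismatch, not two different right actions.
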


\begin{proof} 
Consider the following strict model for $\Conf_n^{*}(e)$:
\[
\CConf^{*}_n(e) = \left\{(\bz,y,t) \in \Conf^*_n(e) \times\bR_{\ge0}\ \left|\begin{array}{l}
\forall i \   |z_i| <  t\end{array} \right. \right\}\simeq\Conf_n^*(e).
\]
We regard $\DD^{*}(e):=\coprod_{n\ge0}\CConf^{*}_n(e)$ as a strict topological monoid, with multiplication given by
\begin{align*}
\CConf^{*}_n(e) \times \CConf^{*}_m(e) &\to \CConf^{*}_{m+n}(e),\\
(\bz,y,t), (\bz',y',t') &\mapsto \left(\bz \cup \tau_t(\bz'), yy' \prod_{j=1}^m\sqrt[e]{\frac{|z_j'|+t}{|z_j'|}},t+t'\right).
\end{align*}
We may regard $\DD^{*}(e):=\coprod_{n\ge0}\CConf^{*}_n(e)$ as a strict right module over $\AA$ by restricting the $\AA$-module structure of $\DD(e)$ to $\DD^{*}(e)$.




\begin{enumerate}
    \item 
The $\DD^*(e)$-module structure on $\DD(e)$ is given by concentric concatenation, extending the multiplicative structure of $\DD^*(e)$:

\begin{align*}
\CConf_n(e) \times \CConf^{*}_m(e) &\to \CConf^{*}_{m+n}(e),\\
(\bz,y,t), (\bz',y',t') &\mapsto \left(\bz \cup \tau_t(\bz'), yy' \prod_{j=1}^m\sqrt[e]{\frac{|z_j'|+t}{|z_j'|}},t+t'\right).
\end{align*}

\item  This follows from a direct inspection of the definitions.
\end{enumerate}
\end{proof}

\begin{remark}
\label{rem:pushout_presentation_preliminaries}
For the next proposition, we note that the cyclic group $\bZ/e$ acts freely on the space $\D^*(e)$ by multiplying the coordinate $y$ of a configuration by the root of unity $\exp(2\pi i/e)$. The action is not one by automorphisms of monoids, but it is one by automorphisms of $\D^*(e)$-modules. This is witnessed by the action of $\bZ/e$ on $\DD^*(e)$ also given by multiplying the coordinate $y$ by $\exp(2\pi i/e)$, which is an action by strict $\DD^*(e)$-module automorphisms. By restriction, also $\bZ$ acts on $\D^*(e)$ by $\D^*(e)$-module automorphisms.

We further observe that if $A$ is a monoid and $\bZ$ acts on $A$ by $A$-module automorphisms, then the homotopy quotient $A\sslash\bZ$ agrees with the coequalizer of the diagram $A\rightrightarrows A$ in $\Mod_A$, where the two maps are given by the identity of $A$ and by multiplication by $1\in\bZ$, respectively. Since $A$ is freely generated by a point, we obtain that for any $A$-module $M$, a map of $A$-modules $A\sslash\bZ\to M$ is uniquely determined by a choice of a point $m\in M$ and a path from $m$ to $1\odot m$ in~$M$.
\end{remark}

\begin{proposition}
\label{prop:pushoutpresentation}
There is a homotopy pushout square in $\Mod_{\D^*(e)}$ as follows, in which all quotients are meant as homotopy quotients:
\[
\begin{tikzcd}[row sep=10pt]
\D^*(e)\sslash\bZ\ar[r]\ar[d]\ar[dr,phantom,"\ulcorner"very near end]&\D^*(e)\sslash(\bZ/e)\ar[d]\\
\D^*(e)\ar[r]&\D(e).
\end{tikzcd}
\]
The top horizontal map is induced by the surjective group homomorphism $\bZ\twoheadrightarrow\bZ/e$.
The left vertical map is induced by the path of configurations of in $\D^*(e)$ starting at $(\{z_1=1\},y=1)$ and ending at 
\[
\big(\{z_1=1\},y=\exp(2\pi i/e)\big)\ =\ 1\odot\big(\{z_1=1\},y=1\big),
\]
the path comprising configurations of one point $z_1$ spinning once counterclockwise around 0.
\end{proposition}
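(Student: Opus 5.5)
The plan is to realize the square geometrically, as a decomposition of $\DD(e)$ into the open subspace $\DD^*(e)$, where $y\neq 0$, and a tubular neighbourhood of its complement, where $y=0$. The square should then be a homotopy pushout of underlying spaces. Since the forgetful functor $\Mod_{\D^*(e)}\to\cS$ has a right adjoint (cotensoring with $\D^*(e)$), it preserves colimits and is conservative. So it suffices to (a) produce the four maps as maps of $\D^*(e)$-modules forming a commuting square, and (b) check that the underlying square of spaces is a homotopy pushout.

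\textbf{Identifying the corners.} In $\Conf_n(e)$ we have $y=0$ exactly when one point, say $z_1$, equals $0$. So the complement $Z\subset\D(e)$ of $\D^*(e)$ is $\coprod_{n\ge1}\Conf_{n-1}(\bC^*)$, recording the remaining points. On the other side, $\bZ/e$ acts freely on $\Conf^*_{n-1}(e)$ by rotating $y$, and the quotient forgets $y$. Hence $\D^*(e)\sslash(\bZ/e)\simeq\coprod_{n}\Conf_{n-1}(\bC^*)\simeq Z$.

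Now let $N$ be a tubular neighbourhood of $Z$, consisting of configurations whose innermost point $z_1$ lies in a small disc $|z_1|<\epsilon$ while all other points satisfy $|z_i|>\epsilon$. Then $N\simeq Z$, and the punctured neighbourhood $N\setminus Z$ is a circle bundle over $Z$. Its fibre is the circle of pairs $(z_1,y)$ with $y^e=z_1\prod_{i\ge2}z_i$ and $|z_1|=\epsilon/2$. As $y$ goes once around its circle, $z_1$ winds $e$ times around $0$; equivalently, one turn of $z_1$ multiplies $y$ by $\exp(2\pi i/e)$.

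\textbf{The module structures.} In the Moore models, $N$ is exactly the image of the action map $\{z_1\text{ near }0\}\times\DD^*(e)\to\DD(e)$: we put a single point near $0$ and concatenate concentrically outside it. Using Remark \ref{rem:pushout_presentation_preliminaries}, we then identify the four corners.
\begin{itemize}
\item The punctured neighbourhood $N\setminus Z$ is the free $\D^*(e)$-module on a point, glued along the loop in which $z_1$ spins once. Since that loop acts on the generator by $1\odot(-)$, this is the coequalizer $\D^*(e)\sslash\bZ$.
\item The inclusion $N\setminus Z\hookrightarrow\D^*(e)$ is the left vertical map. It is the module map determined by the point $(\{1\},1)$ together with the spinning path described in the statement.
\item The inclusion $N\setminus Z\hookrightarrow N\simeq Z$ collapses the circle fibre. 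In module terms it is $\D^*(e)\sslash\bZ\to\D^*(e)\sslash(\bZ/e)$, because $e$ turns of $z_1$ lift to a closed loop in $y$, and the fibre circle is $\bR/e\bZ$ modulo $\bZ$.
\end{itemize}
I would verify these identifications by writing the strict maps explicitly on $\DD^*(e)$ with the given formula for concentric multiplication.

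\textbf{The pushout and the main obstacle.} Given these identifications, the square is the open-cover (excision) pushout
\[
\D(e)=\D^*(e)\cup_{N\setminus Z}N,
\]
which is a homotopy pushout of spaces. By conservativity of the forgetful functor, it is therefore a homotopy pushout in $\Mod_{\D^*(e)}$.

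I expect the main obstacle to be making the identification of $N\setminus Z$ with $\D^*(e)\sslash\bZ$, and of $N$ with $\D^*(e)\sslash(\bZ/e)$, genuinely $\D^*(e)$-equivariant, rather than merely fibrewise equivalences of spaces. My first attempt would be to define strict Moore-type submodules $\mathfrak N\supset\mathfrak N\setminus\mathfrak Z$ of $\DD(e)$ in which the innermost point lies in the disc of radius $1$ and $t\ge1$. On these, right multiplication by $\DD^*(e)$ visibly restricts. I would then check by hand that the maps from the universal coequalizer description of Remark \ref{rem:pushout_presentation_preliminaries} are equivalences on each component $n$, which amounts to comparing fibrations over $\Conf_{n-1}(\bC^*)$.
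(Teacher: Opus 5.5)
\textbf{Overall.} Your architecture matches the paper's proof. You use the same open cover of $\DD(e)$: the locus $y\neq0$, which is a copy of $\DD^*(e)$, and a neighbourhood of the locus $y=0$. That neighbourhood retracts onto $Z\cong\DD^*(e)/(\bZ/e)$ by shrinking the innermost point to $0$ and rescaling $y$. The pushout in $\Mod_{\D^*(e)}$ is then detected on underlying spaces because all pieces are strict submodules. Your remark that the forgetful functor preserves colimits and is conservative makes explicit what the paper uses tacitly.

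\textbf{A slip to repair.} Your first description of $N$ (fixed $\epsilon$, all other points of modulus $>\epsilon$) is not a neighbourhood of $Z$. A configuration containing $0$ and another point of modulus $\le\epsilon$ lies in neither piece, so $\D^*(e)$ and $N$ do not cover $\D(e)$. The radius must depend on the configuration. Your second description, the image of the action map, does this correctly: it is the open set of configurations with a unique point of minimal modulus (optionally of modulus $<\epsilon$). This set contains $Z$, and it is a strict submodule because right multiplication only adds points of modulus $>t$. The paper uses the condition $2|z_i|<\min_{j\neq i}|z_j|$ instead. For the same reason, drop the condition $t\ge1$ on $\mathfrak N$ or impose it on every piece; otherwise $\mathfrak N$ is not open and misses points of $Z$.

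\textbf{Where your route differs.} The genuine difference is how you identify the intersection with $\D^*(e)\sslash\bZ$.
\begin{itemize}
\item The paper pulls back $\exp$ along the normalized innermost point. This gives a $\bZ$-covering of the intersection that is itself a strict $\DD^*(e)$-module. It deformation retracts, as a strict module, onto a copy of $\DD^*(e)$, and the deck transformation becomes multiplication of $y$ by a primitive $e$-th root of unity. So the equivalence is module-theoretic from the start.
\item You instead build the comparison map out of the coequalizer via the universal property, using the base point and the spinning path. You then test it arity-wise on underlying spaces. In arity $n$ the source is the mapping torus of $y\mapsto e^{2\pi i/e}y$ on $\Conf^*_{n-1}(e)$. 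Source and target are both bundles over $\Conf_{n-1}(\bC^*)$ with fibres homotopy equivalent to circles. The induced map on bases is $\tau_{t_0}$, a homotopy equivalence. On fibres the map has degree one: the loop through all $e$ sheets goes to $e$ turns of $z_1$, that is, one turn of $y$.
\end{itemize}

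\textbf{What each route buys.} Your route works, and it gives the left vertical and top maps of the statement by construction. For the top map, the clean reason is the following. The retraction $N\to Z$ kills the spinning path, so the composite is classified by a point together with the constant path. That is exactly the datum of the quotient map to $\D^*(e)/(\bZ/e)$, and it should replace your ``$\bR/e\bZ$ modulo $\bZ$'' heuristic. The paper's covering-space route avoids your fibre-bundle bookkeeping. In exchange, it leaves implicit the matching of its equivalence with the specific maps named in the statement.
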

Before giving the proof, we observe that the homotopy quotient $\D^*(e)\sslash(\bZ/e)$ may be modelled by the actual quotient of topological spaces $\D^*(e)/(\bZ/e)$, since the action of $\bZ/e$ on $\D^*(e)$ is free.
\begin{proof}[Proof of Proposition \ref{prop:pushoutpresentation}]
We decompose $\DD(e)$ as a union of two open subspaces, each carrying a strict $\DD^{*}(e)$-module structure by restriction. The first space comprises configurations $(\bz,y,t)$ with $\bz$ avoiding $0$, and this is isomorphic to $\DD^{*}(e)$ as a right module over $\DD^{*}(e)$, hence equivalent to the bottom left corner $\D^*(e)$. The second space comprises configurations $(\bz,y,t)$ satisfying the following: there exists $1\le i\le n$ such that $|z_i|<|z_j|/2$ for all $j\neq i$ (in particular $n\ge1$). This second space deformation retracts by linear interpolation of $z_i$ onto the subspace of $\DD(e)$ comprising configurations for which there exists a (necessarily unique) $1\le i\le n$ with $z_i=0$; equivalently, configurations with $y=0$. The deformation retraction is one of strict $\DD^{*}(e)$-modules, and the latter space is homeomorphic to $\DD^*(1)\cong\DD^{*}(e)/(\bZ/e)\simeq\D^*(e)\sslash(\bZ/e)$, i.e.~it is equivalent to the top right corner, as a $\DD^*(e)$-module. 

The intersection of the two mentioned open subspaces of $\DD(e)$ comprises all configurations $(\bz,y,t)\in\D(e)$ such that there exists $1\le i\le n$ with $0\neq|z_i|<|z_j|/2$ for all $j\neq i$; this intersection is  equivalent, as a $\DD^{*}(e)$-module, to the homotopy quotient $\D^*(e)\sslash\bZ$. 


In terms of the strict models $\DD(e)$ and $\DD^{*}(e)$ introduced above, for every $n$ we have the following (strict) pushout of topological spaces:
\begin{equation}\label{eq:pushout}
\begin{tikzcd}
\CConf_n^{0*}(e)\ar[r]\ar[d]\ar[dr,phantom,"\ulcorner"very near end]&\CConf_n^{0}(e)\ar[d]\\
\CConf_n^{*}(e)\ar[r]&\CConf_n(e),
\end{tikzcd}
\end{equation}
where \[\CConf_n^{0}(e) = \left\{ (\bz,y, t) \in \CConf_n(e) \left|      \exists\  i \mbox{ such that } 2 |z_i| < \min_{j \neq i }(|z_j|) 
\right.\right\}\] 
and 
\[\CConf_n^{0*}(e)= \left\{ (\bz,y, t) \in \CConf_n(e) \left|      \exists\  i \mbox{ such that } 0\neq 2 |z_i| < \min_{j \neq i }(|z_j|) 
\right.\right\} .\]
We note that taking the pullback of the universal cover
\begin{align*}
    \{ w \in \bC \mid \Re(w) <0 \} = \bC_{\Re <0} & \to D^2\setminus \{0\} = \{z \in \bC \mid 0<|z|<1 \},\\
    w & \mapsto \exp(w)
\end{align*}
with respect to the map 
\begin{align*}
    \CConf_n^{0*} & \to D^2\setminus\{ 0 \},\\
    (\bz, y,  t) & \mapsto \frac{2 z_i}{\min_{j \neq i }|z_j|}
\end{align*}
we obtain the $\bZ$-covering
\[\widetilde{\CConf}{}_{n-1}^{0*}(e)  \to  \CConf_n^{0*}(e);\]
here we are using the notation
\[ \widetilde{\CConf}{}_{n-1}^{0*}(e) = \left\{ (w, \bz, y, t) \in \bC_{\Re<0} \times \Conf_{n-1}^*\times \bC^* \times \bR_{\geq 0} \left| \begin{array}{l}
\epsilon \exp(w) \prod_j z_j= y^e,\\
\forall i \   |z_i| <  t
\end{array}\right.\right\} \]
with $\epsilon = \frac{\min |z_j|}{2}$ for $n>1$ and $\epsilon = \frac{t}{2}$ if $n=1$, 
and the covering  \hypertarget{covering}{} is given by
\begin{align*} 
   \widetilde{\CConf}{}_{n-1}^{0*}(e) & \to  \CConf_n^{0*}(e), \\
(w,\bz, y,  t) & \mapsto \left(\left\{\epsilon \exp(w) \right\} \cup \bz, y ,t\right).
\end{align*}
As a consequence, $\widetilde{\DD}^0(e) := \coprod_{n\geq0 }\widetilde{\CConf}{}_{n-1}^{0*}(e)$ is a $\DD^*(e)$-module and the map  $\widetilde{\DD}^0(e) \to \DD^0(e)$ is a map of $\DD^*(e)$-modules.

The space $\widetilde{\CConf}{}_{n-1}^{0*}(e)$ deformation retracts to the subspace given by the condition $w=-1$, that is onto a subspace homeomorphic to $\CConf_{n-1}^*(e)$, by the linear interpolation
\[ 
(r,(w,\bz, y, t)) \mapsto (-r+(1-r)w, \bz, \exp(r(-1-w)/e)y, t).
\]
The union over $n$ of the above retractions gives a retraction of $\widetilde{\DD}^0(e)$ onto its subspace $\widetilde{\DD}^0(e)_{w=-1}$ given by imposing the condition $w=-1$. We observe that $\widetilde{\DD}^0(e)_{w=-1}$ is a strict $\DD^*(e)$-submodule of $\widetilde{\DD}^0(e)$ which is isomorphic to $\DD^*(e)$ as a strict $\DD^*(e)$-module; moreover, the above retraction of $\widetilde{\DD}^0(e)$ onto $\widetilde{\DD}^0(e)_{w=-1}$ is one of strict $\DD^*(e)$-modules.

The action of $1 \in \bZ$ on $\widetilde{\CConf}{}_{n-1}^{0*}(e)$ maps 
\[ 
(w,\bz, y,  t) \mapsto (w+2 \pi i, \bz, y\exp(2\pi i/e),  t);
\]
 hence, if we compose with the retraction on $\CConf_{n-1}^*(e)$ we obtain that the action of $1 \in \bZ$ on $\CConf_{n-1}^*(e)$ is again
\[
(\bz, y,  t) \mapsto (\bz, y\exp({2 \pi i /e}),  t).
\]
If we consider the space $\DD^{0*}(e) = \coprod_{n} \CConf_n^{0*}(e)$ with the natural structure of $\DD^{*}(e)$-module, we get the following equivalence of $\DD^{*}(e)$-modules, concluding the proof:
\[
\DD^{0*}(e) \simeq \DD^{*}(e)\sslash\bZ.
\]
\end{proof}

\section{Proofs of the main theorems}

In view of Lemma~\ref{lem:factorization}, to prove Theorem \ref{thm:A} it is sufficient to identify $\bB(\D(e),\A,*)$  with the space $\fib(S^3\xrightarrow{e}S^3)$. Similarly, Theorem \ref{thm:B} reduces to the equivalence $\bB(\D^*(e),\A,*)\simeq\Omega S^2\left<e\right>$. Before giving the proof of Theorems \ref{thm:A} and \ref{thm:B}, we shall collect some preliminary technical results.

\begin{lemma}
\label{lem:pullback_BD*e}
Consider the pullback square of groups on the left; then the induced commutative square of classifying spaces on the right is again a pullback square:
\[
\begin{tikzcd}
\D^{*}(e)\ar[r]\ar[d]\ar[dr,phantom,"\lrcorner"very near start]&\bC^*\ar[d,"(-)^e"]\\
\D^{*}(1)\ar[r,"\prod"]&\bC^*;
\end{tikzcd}
\quad\quad
\begin{tikzcd}
\bB\D^{*}(e)\ar[r]\ar[d]\ar[dr,phantom,"\lrcorner"very near start]&\bB\bC^*\ar[d,"\bB(-)^e"]\\
\bB\D^{*}(1)\ar[r,"\prod"]&\bB\bC^*.
\end{tikzcd}
\]
\end{lemma}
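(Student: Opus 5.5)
The plan is to prove the statement levelwise on bar constructions and then argue that geometric realization preserves this particular pullback. I read the horizontal maps as maps of monoids: $\D^*(e)\to\bC^*$ sends $(\bz,y)\mapsto y$, and $\D^*(1)\to\bC^*$ sends $\bz\mapsto\prod z_i$. On the strict models $\DD^*(e)$ these are strict homomorphisms, provided the target $\bC^*$ carries the appropriate Moore-type correction. The factor $\prod_j\sqrt[e]{(|z_j'|+t)/|z_j'|}$ in the multiplication of $\DD^*(e)$ is exactly what makes $y\mapsto y^e$ compatible with $\prod$. So I will first check that the square of strict topological monoids is a \emph{strict} pullback, and that in each simplicial degree
\[
\DD^*(e)^p\cong \DD^*(1)^p\times_{(\bC^*)^p}(\bC^*)^p .
\]
This holds because the bar construction $\bB_\bullet(*,-,*)$ is a product in each degree, and products commute with pullbacks. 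Hence $\bB_\bullet\DD^*(e)$ is the strict levelwise pullback of simplicial spaces. Since geometric realization of simplicial compactly generated spaces commutes with finite limits, $\bB\D^*(e)$ is the strict pullback of the realizations.

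The remaining point is that this strict pullback is also a homotopy pullback. I will show that the right vertical map $\bB(-)^e\colon\bB\bC^*\to\bB\bC^*$ is a Hurewicz fibration, or at least that the pullback is homotopy invariant. The map $(-)^e\colon\bC^*\to\bC^*$ is a surjective continuous homomorphism of Lie groups with finite kernel $\mu_e$. In each degree, $(\bC^*)^p\to(\bC^*)^p$ is therefore a principal $\mu_e^p$-covering, and the realization $\bB\bC^*\to\bB\bC^*$ is a fibre bundle with fibre $\bB\mu_e$. Via Milnor's model it is the bundle $E\bC^*\times_{\bC^*}\bB\bC^*\to\bB\bC^*$, a numerable bundle and hence a Hurewicz fibration. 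Pullbacks along fibrations are homotopy pullbacks, so the realized square is a pullback in $\cS$.

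The main obstacle is this point-set step: identifying the realized map $\bB(-)^e$ as an honest fibration compatible with the strict pullback. The standard Milnor model is not literally the simplicial bar construction. If this comparison proves awkward, I will use a fibre computation in $\cS$ instead. The map $\D^*(e)\to\D^*(1)$ is levelwise a principal $\mu_e$-covering (including $n=0$, where $e$ points lie over a point), and it is the restriction of the covering $\bC^*\to\bC^*$. I will show that $\fib(\bB\D^*(e)\to\bB\D^*(1))\simeq\bB\mu_e$ compatibly with $\fib(\bB\bC^*\xrightarrow{e}\bB\bC^*)\simeq\bB\mu_e$, for instance by realizing the cartesian map of simplicial spaces over $\bB_\bullet\bC^*$ and applying Rezk's descent criterion (realization in $\cS$ is universal for maps that are cartesian over the simplicial base). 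Once the fibres match, the square is a pullback, because $\bB\bC^*\simeq\bC P^\infty$ is connected and a map of fibre sequences over a connected base that is an equivalence on fibres exhibits a pullback.
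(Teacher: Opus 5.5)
You correctly identify where the content of the lemma lies. The levelwise identification $\D^*(e)^p\simeq\D^*(1)^p\times_{(\bC^*)^p}(\bC^*)^p$ is formal, and everything hinges on geometric realization preserving this particular pullback. Neither of your two arguments for that step works as stated.

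\textbf{First route.} It needs the simplicial bar-construction map $\bB(-)^e$ to be a Hurewicz fibration or bundle. You give no argument for this, and you note yourself that the Milnor model is a different space. The formula $E\bC^*\times_{\bC^*}\bB\bC^*$ is also not the right object; the fibre should be a model of $\bB\mu_e$, e.g.\ $E\bC^*/\mu_e$. A smaller point: no Moore-type correction on $\bC^*$ makes $(\bz,y,t)\mapsto y$ a strict homomorphism, because the factor $\prod_j\sqrt[e]{(|z'_j|+t)/|z'_j|}$ depends on the individual $|z'_j|$. You should map to $S^1$ instead, via $y\mapsto y/|y|$ and $\bz\mapsto\prod z_i/|\prod z_i|$. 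This is strict because $\tau_t$ preserves arguments and the correction factor is a positive real.

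\textbf{Fallback.} This misapplies descent. Compatibility of realization in $\cS$ with base change requires a \emph{cartesian} natural transformation, meaning every simplicial structure square is a pullback. Neither $\D^*(e)^{\times\bullet}\to\D^*(1)^{\times\bullet}$ nor $(\bC^*)^{\times\bullet}\xrightarrow{e}(\bC^*)^{\times\bullet}$ is cartesian, since their fibres $\mu_e^p$ change with $p$. For instance, the square for $d_0$ compares a $\mu_e^p$-cover of $(\bC^*)^p$ with a $\mu_e^{p-1}$-cover. So you have no independent way to compute $\fib(\bB\D^*(e)\to\bB\D^*(1))\simeq\bB\mu_e$. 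For a non-group-like monoid, that computation is essentially the statement of the lemma itself.

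\textbf{What the paper does.} The missing idea is to replace the bottom-right corner $(\bC^*)^{\times\bullet}$ by the constant simplicial space $\bB\bC^*$. Let $X_\bullet$ be the pullback of $\D^*(1)^{\times\bullet}\to\bB\bC^*\leftarrow(\bC^*)^{\times\bullet}$.
\begin{itemize}
\item $X_\bullet$ is a pullback over a constant base, so its realization is the pullback of the realizations, by universality of colimits in $\cS$.
\item The comparison $\D^*(e)^{\times\bullet}\to X_\bullet$ is a Dwyer--Kan equivalence of Segal spaces. It is essentially surjective because $X_0\simeq\Omega\bB\bC^*\simeq\bC^*$ is connected, and fully faithful as a pullback of fully faithful maps. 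Hence it induces an equivalence on realizations.
\end{itemize}

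\textbf{Repairing your point-set route.} If you prefer to stay point-set, the right tool is the Bousfield--Friedlander theorem rather than a fibration claim. A levelwise homotopy pullback realizes to a homotopy pullback provided the simplicial spaces on the fibration side satisfy the $\pi_*$-Kan and $\pi_0$ conditions. Both hold here because $(S^1)^{\times\bullet}$ is levelwise connected, and the levelwise squares are strict pullbacks along coverings.
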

\begin{proof}
By definition, $\bB\D^{*}(e)$  is the geometric realization of the simplicial space $\D^{*}(e)^{\times\bullet}\colon\Delta^\op\to\cS$ given by
\[
[p]\mapsto\D^{*}(e)^p\simeq\D^{*}(1)^p\times_{(\bC^*)^p}(\bC^*)^p.
\]
We have in fact a pullback square of simplicial spaces
\[
\begin{tikzcd}
\D^{*}(e)^{\times\bullet}\ar[r]\ar[d]\ar[dr,phantom,"\lrcorner"very near start]&(\bC^*)^{\times\bullet}\ar[d]\\
\D^{*}(1)^{\times\bullet}\ar[r]&(\bC^*)^{\times\bullet}.
\end{tikzcd}
\]
Note that all terms are  (non-complete) Segal spaces. Consider now the space $\bB\bC^*$ as a constant simplicial space and note that there is a canonical map of simplicial spaces $(\bC^*)^{\times\bullet}\to\bB\bC^*$. Let $X_\bullet$ denote the fiber product of the cospan of simplicial spaces $\D^{*}(1)^{\times\bullet}\to \bB\bC^*\leftarrow (\bC^*)^{\times\bullet}$ obtained from the composite maps $\D^{*}(1)^{\times\bullet}\to(\bC^*)^{\times\bullet}\to\bB\bC^*$ and $(\bC^*)^{\times\bullet}\xrightarrow{(-)^e}(\bC^*)^{\times\bullet}\to\bB\bC^*$. 
Then there is a canonical map of Segal spaces $\D^{*}(e)^{\times\bullet}\to X_\bullet$, which is easily seen to be a Dwyer-Kan equivalence:
it is essentially surjective because $X_0\simeq\bC^*$ is connected, and it is fully faithful because it is the pullback of three fully faithful maps of Segal spaces. It follows that $\bB\D^{*}(e)\xrightarrow{\simeq}|X_\bullet|$, as Dwyer-Kan equivalences of Segal spaces induce equivalences on geometric realizations. Finally, we have that $X_\bullet$ is a pullback of simplicial spaces in which the middle simplicial space is constant, so that $|X_\bullet|\xrightarrow{\simeq}\bB\D^{*}(1)\times_{\bB\bC^*}\bB\bC^*$.
\end{proof}
\begin{remark}
\label{rem:pullback_BD*e}
Taking loop spaces in the right pullback square from Lemma \ref{lem:pullback_BD*e}, we obtain in particular a pullback square of groups
\[
\begin{tikzcd}
G(\D^{*}(e))\ar[r]\ar[d]\ar[dr,phantom,"\lrcorner"very near start]&\bC^*\ar[d,"(-)^e"]\\
G(\D^{*}(1))\ar[r,"\prod"]&\bC^*
\end{tikzcd}
\]
whose vertical fibers are equivalent to $\bZ/e$. This shows that $G(\D^*(e))$, as a space, is an $e$-fold covering of $G(\D^*(1))$.
\end{remark}

\begin{remark}
\label{rem:McDuff}
The canonical map of $\A$-modules $L_\A(\D^*(1))\to G(\D^*(1))$ fits into the following commutative diagram, whose rows are fiber sequences:
\[
\begin{tikzcd}
L_\A(\D^*(1)) \ar[r] \ar[d] &\bB(\D^*(1),\A,*) \ar[r] \ar[d] & \bB(*,\A,*) \ar[d]\\
G(\D^*(1)) \ar[r] &\bB(\D^*(1),\D^*(1),*)\ar[r]&\bB(*,\D^*(1),*).
\end{tikzcd}
\]
We next argue that the right square in the previous diagram is a pullback square, and hence, that the left vertical map is an equivalence. To see this, consider the annulus $M=S^1\times[0,1]$, and decompose $S^1=S^1_+\cup S^1_-$ as a union of two intervals. 
Then by \cite[Theorem 2.6]{McDuff} one may identify
the right commutative square above with the following one
\[
\begin{tikzcd}
\map((M,S^1\times0\sqcup S^1_+\times1);(S^2,*))\ar[r]\ar[d]&\map((M,S^1_+\times1);(S^2,*))\simeq\Omega S^2\ar[d]\\
*\ar[r]&\map((M,\emptyset);(S^2,*))\simeq\cL S^2,
\end{tikzcd}
\]
and the latter is evidently a pullback square. Here we are considering spaces of continuous maps sending the relevant subspace of $\partial M$ constantly to $*\in S^2$. We are also denoting by $\cL S^2$ the free loop space of $S^2$.
It follows that $L_\A(\D^*(1))\xrightarrow{\simeq}G(\D^*(1))$; the latter may as well be identified with~$\Omega(\cL S^2)$.
\end{remark}
\begin{proposition} \label{prop:fibration}
There is a fiber sequence of grouplike monoids
\[
G(\A)\to G(\D^*(e))\to\Omega S^2\left<e\right>,
\]
where $\Omega S^2\left<e\right>$ denotes the connected $e$-fold covering of the connected grouplike monoid $\Omega S^2$, using that the latter satisfies $\pi_1(\Omega S^2)\cong\bZ$.
\end{proposition}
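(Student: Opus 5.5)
We would prove this first for $e=1$ and then pass to general $e$ using the covering picture from Remark~\ref{rem:pullback_BD*e}.

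\textbf{Case $e=1$.} The monoid map $H\colon\A\to\D^*(1)$ of Definition~\ref{def:HMapOfMonoids} is the inclusion of configurations avoiding $0$ into configurations in $\bC^*$. After group completion it induces a map of group-like monoids $G(\A)\to G(\D^*(1))$. Its cofiber, in the sense of the fiber of $\bB\A\to\bB\D^*(1)$ after looping, should be computed via McDuff's scanning.

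\begin{itemize}
\item By Segal's theorem, $\bB\A\simeq\Omega S^2$ after delooping appropriately. More precisely, $G(\A)\simeq\Omega^2S^2$, hence $\bB\A\simeq \Omega S^2$.
\item By Remark~\ref{rem:McDuff}, $G(\D^*(1))\simeq\Omega\cL S^2$, so $\bB\D^*(1)\simeq\cL S^2$, identified with $\map((M,\emptyset);(S^2,*))$.
\item Under these identifications, $\bB H$ should be the inclusion of based loops $\Omega S^2\to\cL S^2$. This is the square of Remark~\ref{rem:McDuff}, read through \cite[Theorem 2.6]{McDuff} by restricting to a collar.
\end{itemize}

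The evaluation fibration $\Omega S^2\to\cL S^2\to S^2$ then identifies the cofiber with $S^2$. Looping gives the fiber sequence of group-like monoids
\[
\Omega^2S^2\to\Omega\cL S^2\to\Omega S^2 .
\]
That is, $G(\A)\to G(\D^*(1))\to\Omega S^2$, where the second map is $\Omega$ of the evaluation map; this is a map of loop spaces, hence of monoids.

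\textbf{General $e$.} The map $H$ factors through $\D^*(e)$ by Definition~\ref{def:HMapOfMonoids}. By Remark~\ref{rem:pullback_BD*e}, $G(\D^*(e))\to G(\D^*(1))$ is the pullback of $(-)^e\colon\bC^*\to\bC^*$ along the product map $G(\D^*(1))\to\bC^*$. We therefore define
\[
G(\D^*(e))\to\Omega S^2\times_{\bC^*}\bC^*,
\]
where $\Omega S^2\to\bC^*$ needs to be produced.

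The key compatibility is that $\prod\colon G(\D^*(1))\to\bC^*$ should factor, up to homotopy as monoid maps, through $\Omega S^2\to\bC^*\simeq S^1$. This map is the one inducing $\pi_1(\Omega S^2)\cong\bZ\to\pi_1(S^1)$; it is the Hopf-type map $\Omega S^2\to S^1$ of Remark~\ref{rem:OmegaS2_splits}, i.e.\ the delooping of $S^2\to\bB S^1=\bC P^\infty$. To justify the factorization, note that $\prod$ restricted to $G(\A)$ is null as a monoid map, because $Y_1$ provides a canonical trivialization through the map $H$ into $\D^*(e)$ for every $e$. Hence $\bB\prod$ factors through the cofiber $S^2$, and the factoring map is determined on $\pi_2$ by the degree: one point winding once around $0$ gives $\prod$ degree $1$.

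Granting this, pulling back along the $e$-fold cover gives
\[
G(\D^*(e))\simeq G(\D^*(1))\times_{\Omega S^2}\bigl(\Omega S^2\times_{\bC^*}\bC^*\bigr),
\]
and $\Omega S^2\times_{\bC^*}\bC^*$ is exactly $\Omega S^2\langle e\rangle$, since the map is an isomorphism on $\pi_1$. The fiber of $G(\D^*(e))\to\Omega S^2\langle e\rangle$ is then the same as the fiber of $G(\D^*(1))\to\Omega S^2$, namely $G(\A)$. The map $G(\A)\to G(\D^*(e))$ is $G(H)$, which is the lift given by the trivialization $Y_e$.

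\textbf{Main obstacle.} The hardest step is the monoid-level identification of $\bB H$ with $\Omega S^2\to\cL S^2$, and of $\prod$ with the composite $\cL S^2\to S^2\to\bC P^\infty$ after delooping. I would try to check this on $\pi_1$ and $\pi_2$, and argue that delooped maps between these specific spaces are determined by that data. This works, for instance, because $[S^2,\bC P^\infty]=\bZ$, and because the fiber sequence is detected by the McDuff pullback square of Remark~\ref{rem:McDuff}.
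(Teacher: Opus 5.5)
Your route is the paper's. For $e=1$ you identify $G(\A)\to G(\D^*(1))$, via scanning, with $\Omega^2S^2\to\Omega\cL S^2$ and loop the evaluation fibration. For general $e$ you paste the pullback of Remark~\ref{rem:pullback_BD*e} with $\Omega S^2\langle e\rangle\simeq\Omega S^2\times_{\bC^*}\bC^*$. That step needs $\prod$ to factor, as a monoid map, through the $1$-truncation $\Omega S^2\to\bC^*$. The paper simply asserts this factorization, and the argument you give for it is not valid. The space $S^2$ is the \emph{base} of the fibration $\Omega S^2\to\cL S^2\xrightarrow{\mathrm{ev}}S^2$, not the cofiber of $\Omega S^2\to\cL S^2$. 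A pointed map $\cL S^2\to\bC P^\infty$ that is null on the fiber has no general reason to factor through the base. In terms of group-like monoids: killing $N$ in a fiber sequence $N\to G\to Q$ does not produce a map out of $Q$, because the pushout $G\sqcup_N *$ is $\Omega(\bB G\cup C\bB N)$, not $Q$. Likewise, $[S^2,\bC P^\infty]\cong\bZ$ only controls the second map once a factorization is known to exist.

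A cohomological argument does work.
\begin{itemize}
\item Monoid maps $G(\D^*(1))\to\bC^*$ are pointed maps $\cL S^2\to\bC P^\infty$, i.e.\ classes in $H^2(\cL S^2;\bZ)$.
\item The constant-loop section makes $\mathrm{ev}_*$ surjective on $H_2$.
\item In the Serre spectral sequence of the evaluation fibration, $d^2\colon E^2_{2,1}\to E^2_{0,2}$ is multiplication by $\pm2$. Rationally this says $H^2(\cL S^2;\mathbb{Q})\cong\mathbb{Q}$, which you can also read off from the Sullivan model. Hence $\ker(\mathrm{ev}_*)\subset H_2(\cL S^2;\bZ)$ is torsion, while $H_1(\cL S^2;\bZ)\cong\bZ$ is free.
\item So $\mathrm{ev}^*\colon H^2(S^2;\bZ)\to H^2(\cL S^2;\bZ)$ is an isomorphism. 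Therefore $\bB\prod\simeq k\cdot(\iota\circ\mathrm{ev})$ for a generator $\iota\colon S^2\to\bC P^\infty$ and some $k\in\bZ$.
\item Your spinning-point class $\gamma\in\pi_2(\cL S^2)$ gives $k\cdot\deg(\mathrm{ev}\circ\gamma)=1$. So $k=\pm1$, and $\Omega\iota$ is the $1$-truncation; the sign is harmless.
\end{itemize}
Replace your step ``null on $G(\A)$, hence factors through $S^2$'' with this argument; the nullity claim then becomes unnecessary. The rest of your proof goes through as in the paper.
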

\begin{proof}
In the case $e=1$, scanning \cite{Segal73,McDuff} provides the following commutative diagram of groups
\[
\begin{tikzcd}
G(\A)\ar[r]\ar[d,"\simeq"]&G(\D^{*} (1))\ar[d,"\simeq"]\\
\Omega^2S^2\ar[r]&\Omega(\cL S^2),
\end{tikzcd}
\]
where the bottom map is induced by the inclusion $\Omega S^2\to\cL S^2$. As the latter agrees with the inclusion of the fiber of the evaluation map $\cL S^2\to S^2$, the result follows for $e=1$ from the fiber sequence $\Omega^2S^2\to \Omega(\cL S^2)\to\Omega S^2$.

We further observe that the map $\prod\colon\D^{*}(1)\to\bC^*$ agrees as a map of monoids with the composite map $\D^{*}(1)\to L(\D^{*}(1))\simeq\Omega(\cL S^2)\to\Omega S^2\to\bC^*$, where the last map is given by 1-truncation, using that $\Omega S^2$ is connected with fundamental group $\bZ$. From the equivalence $\Omega S^2\left<e\right>\simeq\Omega S^2\times_{\bC^*}\bC^*$, and using Lemma \ref{lem:pullback_BD*e} and Remark \ref{rem:pullback_BD*e} it then follows that we have for generic $e$ a commutative diagram of groups
\[
\begin{tikzcd}
G(\A)\ar[d,equal]\ar[r]&G(\D^{*}(e))\ar[d]\ar[r]\ar[dr,phantom,"\lrcorner"very near start]&\Omega S^2\left<e\right>\ar[d]\ar[r]\ar[dr,phantom,"\lrcorner"very near start]&\bC^*\ar[d,"(-)^e"]\\
G(\A)\ar[r]&G(\D^{*}(1))\ar[r]&\Omega S^2\ar[r]&\bC^*,
\end{tikzcd}
\]
reducing the case of generic $e$ to the case $e=1$.
\end{proof}

\begin{corollary}\label{cor:D*}
The canonical map of $\A$-modules $  L_\A(\D^*(e))\to G(\D^*(e))$ is an equivalence.
\end{corollary}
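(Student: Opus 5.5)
The plan is to reduce to the case $e=1$, which is already settled in Remark~\ref{rem:McDuff}, by a covering-space argument. Both $\D^*(e)$ and $G(\D^*(e))$ are obtained from their $e=1$ counterparts by pulling back the $e$-fold cover $(-)^e\colon\bC^*\to\bC^*$ along a map to $\bC^*$. So I will show that $L_\A(\D^*(e))$ is obtained from $L_\A(\D^*(1))$ by the same kind of pullback, compatibly with the canonical maps to $G(\D^*(e))$ and $G(\D^*(1))$.

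\emph{Step 1: localization commutes with the pullback.} Restricting along $H$ makes $\bC^*$ an $\A$-module, acting through the monoid map $\prod\colon\A\to\bC^*$, and $(-)^e\colon\bC^*\to\bC^*$ is then a map of $\A$-modules. The square of Lemma~\ref{lem:pullback_BD*e} then exhibits $\D^*(e)\simeq\D^*(1)\times_{\bC^*}\bC^*$ as a pullback in $\Mod_\A$. I would apply $L_\A(-)\simeq\bB(-,\A,G(\A))$ from Lemma~\ref{lem:localization_via_fiber} levelwise: in simplicial degree $p$ this gives $M\times\A^p\times G(\A)$, which visibly preserves the pullback. The delicate point is geometric realization. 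I would argue exactly as in the proof of Lemma~\ref{lem:pullback_BD*e}: compose with the canonical map $\bB(\bC^*,\A,G(\A))\to\bB(*,\bC^*,*)\simeq\bB\bC^*$, replace the cospan by one over the constant simplicial space $\bB\bC^*$, and use that realization commutes with pullbacks over a constant base. Equivalently, and more concretely, the map $\D^*(e)\to\D^*(1)$ is an $e$-fold covering classified by the composite $\D^*(1)\xrightarrow{\prod}\bC^*\to K(\bZ/e,1)$. Since this classifying map is $\A$-equivariant for the action on $K(\bZ/e,1)$ through $\A\to\bC^*$, it extends to $L_\A(\D^*(1))$. I would conclude that $L_\A(\D^*(e))\to L_\A(\D^*(1))$ is the $e$-fold covering pulled back from $(-)^e$ along the induced map $L_\A(\D^*(1))\to\bC^*$.

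\emph{Step 2: compare the two coverings.} By Remark~\ref{rem:pullback_BD*e}, $G(\D^*(e))\to G(\D^*(1))$ is likewise the pullback of $(-)^e$ along $G(\D^*(1))\to\bC^*$. The canonical maps $L_\A(-)\to G(-)$ are natural in the module, so they give a map between the two pullback squares. On the base they induce the equivalence $L_\A(\D^*(1))\xrightarrow{\simeq}G(\D^*(1))$ of Remark~\ref{rem:McDuff}. Moreover, the maps to $\bC^*$ are compatible, since both are induced from $\prod\colon\D^*(1)\to\bC^*$. A map of pullbacks of the same cospan $\bC^*\xrightarrow{(-)^e}\bC^*$ that is an equivalence on the base is an equivalence on total spaces. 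Hence $L_\A(\D^*(e))\to G(\D^*(e))$ is an equivalence.

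\emph{Main obstacle.} The hard part is Step 1: justifying that $L_\A$, a geometric realization, commutes with this fibre product. If the constant-base trick is awkward to implement, I would instead run the argument of Remark~\ref{rem:McDuff} directly. That means showing the square with corners $\bB(\D^*(e),\A,*)$, $\bB\A$, $\bB(\D^*(e),\D^*(e),*)\simeq *$ and $\bB\D^*(e)$ is a pullback. I would derive this from the $e=1$ square by pulling everything back along $\bB(-)^e\colon\bB\bC^*\to\bB\bC^*$, using Lemma~\ref{lem:pullback_BD*e}. The pasting law for pullbacks then finishes the proof.
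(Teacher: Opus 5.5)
Your strategy matches the paper's. You obtain $L_\A(\D^*(e))$ as the pullback of $L_\A(\D^*(1))$ along $(-)^e\colon\bC^*\to\bC^*$ by working levelwise in $\bB_\bullet(-,\A,G(\A))$. You then compare with the pullback square of Remark~\ref{rem:pullback_BD*e} and use the $e=1$ equivalence of Remark~\ref{rem:McDuff}. Your Step~2 is exactly the paper's last step.

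\textbf{The realization step in Step~1 needs correcting.} Copying Lemma~\ref{lem:pullback_BD*e} with $\bB\bC^*$ as constant base does not work. There, $\bB\bC^*$ is the realization of the middle term $(\bC^*)^{\times\bullet}$. Here, the middle term $\bB_\bullet(\bC^*,\A,G(\A))$ realizes to $L_\A(\bC^*)\simeq\bC^*$, because the $\A$-action on $\bC^*$ is already invertible. Every map from $\bC^*$ to $\bB\bC^*\simeq K(\bZ,2)$ is null. So realizing over $\bB\bC^*$ would give $L_\A(\D^*(1))\times\bC^*\times\bC^*$, not an $e$-fold cover of $L_\A(\D^*(1))$.

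\textbf{No replacement of the cospan is needed.} The paper's argument runs as follows.
\begin{enumerate}
\item The simplicial space $\D^*(1)\times\A^{\times\bullet}\times G(\A)$ maps directly to the \emph{constant} simplicial space $\bC^*$, via $\prod$ on each factor, extended over $G(\A)$.
\item The levelwise pullback of $(-)^e$ is then $\D^*(e)\times\A^{\times\bullet}\times G(\A)$. This holds because on the $\A$- and $G(\A)$-factors, $\prod$ has the canonical $e$-th root coming from $H$.
\item Realization commutes with pullbacks over a constant simplicial space, because colimits in $\cS$ are universal.
\end{enumerate}
Your covering-space reformulation tacitly uses this same fact to identify the pulled-back cover with $L_\A(\D^*(e))$.

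\textbf{Your fallback also fails as stated.} Pulling the $e=1$ square back along $\bB\D^*(e)\to\bB\D^*(1)$ turns the corners $*$ and $\bB\A$ into $\bB(\bZ/e)$ and $\bB\A\times\bB(\bZ/e)$. This happens because both of their maps to $\bB\bC^*$ lift along $\bB(-)^e$. So it does not produce the square for $e$. Relating $\bB(\D^*(e),\A,*)$ to $\bB(\D^*(1),\A,*)$ needs the same levelwise pullback-and-realize argument anyway.
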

\begin{proof} 
For $e=1$ we have observed that the canonical map of $\A$-modules $L_\A(\D^*(e))\to G(\D^*(e))$ is an equivalence in Remark \ref{rem:McDuff}.

For generic $e$ we have a pullback square of simplicial spaces below on the left, giving a pullback square of spaces on the right under geometric realization, using that the bottom right simplicial space in the pullback on the left is constant:
\[
\begin{tikzcd}
\D^*(e)\times\A^{\times\bullet}\times G(\A)\ar[r]\ar[d]\ar[dr,phantom,"\lrcorner"very near start]&\bC^*\ar[d,"(-)^e"]\\
\D^*(1)\times\A^{\times\bullet}\times G(\A)\ar[r]&\bC^*;
\end{tikzcd}\quad\quad
\begin{tikzcd}
L_\A(\D^*(e))\ar[r]\ar[d]\ar[dr,phantom,"\lrcorner"very near start]&\bC^*\ar[d,"(-)^e"]\\
L_\A(\D^*(1))\ar[r]&\bC^*.
\end{tikzcd}
\]
Comparing this with the pullback square of groups from Remark \ref{rem:pullback_BD*e}, we obtain that the following square is also a pullback square:
\[
\begin{tikzcd}
L_\A(\D^*(e))\ar[r]\ar[d]\ar[dr,phantom,"\lrcorner"very near start]&G(\D^*(e))\ar[d]\\
L_\A(\D^*(1))\ar[r,"\simeq"]&G(\D^*(1)).
\end{tikzcd},
\]
and since the bottom map is an equivalence, we conclude that the top map is an equivalence too.
\end{proof}
\begin{proof}[Proof of Theorem \ref{thm:B}]

The composite map of $\A$-modules $\D^*(e)\to\D^*(1)\to\D(1)\to\A$, together with Lemma \ref{lem:factorization}, allows us to write an equivalence of $\A$-modules $L_\A(\D^*(e))\simeq G(\A)\times\bB(\D^*(e),\A,*)$.

For $e=1$ the space $\bB(\D^*(1),\A,*)$ is equivalent to $\Omega S^2$, as observed in Remark \ref{rem:McDuff}. For generic $e$, the pullback square of simplicial spaces on the left in the following gives rise to the pullback square of spaces on the right, again using that the bottom right simplicial space is constant:
\[
\begin{tikzcd}
\D^*(e)\times\A^{\times\bullet}\ar[r]\ar[d]\ar[dr,phantom,"\lrcorner"very near start]&\Omega S^2\left<e\right>\ar[d]\\
\D^*(1)\times\A^{\times\bullet}\ar[r]&\Omega S^2;
\end{tikzcd}\quad\quad
\begin{tikzcd}
\bB(\D^*(e)\A,*)\ar[r]\ar[d]\ar[dr,phantom,"\lrcorner"very near start]&\Omega S^2\left<e\right>\ar[d]\\
\bB(\D^*(1),\A,*)\ar[r,"\simeq"]&\Omega S^2.
\end{tikzcd}
\]
Using that the bottom map is an equivalence of spaces, we conclude that the top map is an equivalence of spaces too.
\end{proof}

\begin{proof}[Proof of Theorem \ref{thm:A}]
By Proposition \ref{prop:pushoutpresentation} there is a homotopy pushout square of spaces as follows
\[
\begin{tikzcd}
\bB(\D^*(e)/\bZ,\A,*)\ar[r]\ar[d]\ar[dr,phantom,"\ulcorner"very near end]&\bB(\D^*(e)/\bZ/e,\A,*)\ar[d]\\
\bB(\D^*(e),\A,*)\ar[r]&\bB(\D(e),\A,*).
\end{tikzcd}
\]
In fact the spaces involved in the previous pushout square have a residual $\Omega S^2\left<e\right>$-action by Proposition \ref{prop:D*}, and the previous is a pushout of $\Omega S^2\left<e\right>$-modules.
Using that bar constructions commute with homotopy quotients we
may rewrite the previous as follows:
\[
\begin{tikzcd}
(\Omega S^2\left<e\right>)/\bZ\ar[d]\ar[r]\ar[dr,phantom,"\ulcorner"very near end]&(\Omega S^2\left<e\right>)/\bZ/e\ar[d]\\
\Omega S^2\left<e\right>\ar[r]&B(\D(e),\A,*).
\end{tikzcd}
\]
Using that $\Omega S^2\left<e\right>$ is a grouplike monoid, we may convert each term in the previous pushout square of $\Omega S^2\left<e\right>$-modules into the fiber of a map towards $\bB(\Omega S^2\left<e\right>)$. All in all we obtain an equivalence
\[
\bB(\D(e),\A,*)\simeq\fib(\bB\bZ/e\sqcup_{\bB\bZ}*\longrightarrow \bB(\Omega S^2\left<e\right>)).
\]
It is left to identify the map $\bB\bZ/e\sqcup_{\bB\bZ}*\longrightarrow \bB(\Omega S^2\left<e\right>)$, or at least its fiber. We first observe that the given map must induce an equivalence on $\pi_2$: indeed both the source and the target have $\pi_2$ isomorphic to $\bZ$, and the map must be a surjection (and hence a bijection) because we know that $\pi_1$ of the fiber, that is $\pi_1(\bB(\D(e),\A,*))$, vanishes. This can for instance be deduced from the fact that the stable $H_1(\Conf_n(e))$ is equal to $\bZ$ and is not larger than that.

We may pass to 2-connected covers and write an equivalence
\[
\bB(\D(e),\A,*)\simeq\fib(S^3\to S^3)
\]
for a suitable self map of $S^3$. Here we use that the 2-connected cover of $\bB\bZ/e\sqcup_{\bB\bZ}*$ has the form $S^1\sqcup_{S^1\times S^1}S^1$ and must therefore be $S^3$ if it is 2-connected. On the other hand the 2-connected cover of $\bB(\Omega S^2\left<e\right>)$ is the classifying space of the universal cover of $\Omega S^2\left<e\right>$, which is the classifying space of $\Omega S^3$, which is $S^3$.

To conclude that the degree of the self map of $S^3$ must be precisely~$e$ it suffices to compute $H_2(\bB(\D(e),\A,*))\cong\pi_2(\bB(\D(e),\A,*))$ to be equal to $\bZ/e$. This can for instance be done by computing the stable integral homology group $H_2(\Conf_n(e)) = \bZ/e \times \bZ/2$, as computed in \cite[Thm.~6.4]{CalMar11}.
\end{proof}
As a corollary of Theorems \ref{thm:A} and \ref{thm:B},  we get the following corollary.
\begin{corollary}
The spaces $L_\A(\D(e))$ and $L_\A(\D^*(e))$ agree with their own quillenizations.
\end{corollary}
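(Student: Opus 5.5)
The plan is to use Theorems \ref{thm:A} and \ref{thm:B} to reduce the claim to the statement that the three spaces $G(\A)\simeq\Omega^2S^2$, $\fib(S^3\xrightarrow{e}S^3)$ and $\Omega S^2\left<e\right>$ are each their own quillenization. Here "agrees with its own quillenization" means that every component has perfect commutator subgroup of $\pi_1$ equal to the trivial group. Equivalently, $\pi_1$ of each component is abelian, so the plus construction with respect to the maximal perfect subgroup does nothing.

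Theorems \ref{thm:A} and \ref{thm:B} give equivalences of spaces
\[
L_\A(\D(e))\simeq \Omega^2S^2\times\fib(S^3\xrightarrow{e}S^3),\qquad L_\A(\D^*(e))\simeq\Omega^2S^2\times\Omega S^2\left<e\right>.
\]
Quillenization commutes with finite products, so it is enough to check each factor separately.

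First, $\Omega^2S^2$ is a loop space. Every component is equivalent to $\Omega^2_0S^2\simeq\Omega^2S^3$, whose fundamental group $\pi_3(S^3)\cong\bZ$ is abelian. Second, $\Omega S^2\left<e\right>$ is a connected covering of the loop space $\Omega S^2$, so its fundamental group is a subgroup $e\bZ\subseteq\pi_1(\Omega S^2)\cong\bZ$, again abelian. Third, $\fib(S^3\xrightarrow{e}S^3)$ is simply connected, which follows from the long exact sequence of homotopy groups together with $\pi_1(S^3)=0$ and $\pi_2(S^3)=0$. The fiber is also connected, because $\pi_1(S^3)=0$.

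An abelian fundamental group has no nontrivial perfect subgroups. Hence the plus construction is the identity up to equivalence on each factor, and therefore on each product.

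The only point requiring care is the convention for quillenization of non-connected spaces, namely that it is applied componentwise. Under that convention, and given the two theorems, no step presents a real obstacle.
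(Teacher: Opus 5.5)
Your proof is correct and takes the same approach as the paper. The paper reads off from Theorems \ref{thm:A} and \ref{thm:B} that $\pi_1$ of every component of $L_\A(\D(e))$ and $L_\A(\D^*(e))$ is abelian, so there is no nontrivial perfect subgroup to kill; you fill in the factor-by-factor $\pi_1$ computations that the paper leaves implicit. One minor point: strictly speaking, abelian $\pi_1$ is only sufficient, not equivalent, to having trivial maximal perfect subgroup, but you only use that direction, so the argument is unaffected.
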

\begin{proof}
It follows from Theorem~\ref{thm:A} that $\pi_1(L_\A(\D(e)))$ is abelian, whence the claim follows. Similarly, we get the statement for $L_\A(\D^*(e))$ from Theorem \ref{thm:B}.
\end{proof}

\section{A conjectural scanning map}
In this section we introduce an explicit scanning map $s:\mathcal{D}(e) \to \mathrm{fib}(S^3 \stackrel{e}{\rightarrow} S^2)$, that conjecturally is homotopic to the composition of maps of spaces
\[
\D(e)\to L_\A(\D(e))\simeq \Omega^2 S^2 \times \fib(S^3\xrightarrow{e}S^3)\xrightarrow{\text{project}} \fib(S^3\xrightarrow{e}S^3),
\]
where the middle equivalence is provided by 
Theorem \ref{thm:A}. 
We observe that the analogous composite involving the projection onto the first factor $\Omega^2S^2$ is simply the composition of the forgetful map
$\mathcal{D}(e) \to \mathcal{D}(1)=\mathcal{A}$ followed by the Segal scanning map 
$\mathcal{A} \to \Omega^2S^2$  described in \cite{Segal73}.

We model $S^3$ by the space $\bC^2\setminus\{0\}$, and model the map $S^3\stackrel{e}{\to}S^3$ by the map $q\colon\bC^2\setminus\{0\}\to\bC^2\setminus\{0\}$ given by $(w_1,w_2)\mapsto(w_1^e,w_2)$. For fixed $n\ge0$, consider the map of spaces
\[
p_n\colon \Conf_n(e)\to\bC^2-0,\quad p(\bz,y)=\left(y,\sum_{i=1}^n\prod_{j\neq i}z_j\right)=\left(y,\frac d{dz}\prod_{i=1}^n(z+z_i)\Big|_{z=0}\right)
\]
We observe that the above formula indeed gives a point in $\bC^2\setminus\{0\}$, using that the points $z_1,\dots,z_n\in\bC$ are pairwise distinct.

The composite $q\circ p_n\colon\Conf_n(e)\to\bC^2\setminus\{0\}$ is given by the formula
\[
q\circ p_n(\bz,y)=\left(\prod_{i=1}^nz_i,\sum_{i=1}^n\prod_{j\neq i}z_j\right),
\]
and the map
\[
h_n\colon\Conf_n(e)\times[0,1]\to\bC^2\setminus\{0\},\quad(\bz,y,r)\mapsto \left(\prod_{i=1}^n(1-r+rz_i),\sum_{i=1}^n\prod_{j\neq i}(1-r+rz_j)\right)
\]
provides a homotopy between $q\circ p_n$ and the constant map $\Conf_n(e)\to\bC^2\setminus\{0\}$ with value $(1,n)$.

The combination of $p_n$ and the nullhomotopy of $q\circ p_n$ provides the map $s$ on the subspace $\Conf_n(e)\subset\D(e)$.

We may use the above formulas also to describe a conjectural scanning map $s'\colon\D^*(e)\to\Omega S^2\left<e\right>$. Following Remark \ref{rem:OmegaS2_splits}, we may identify $\Omega S^2\left<e\right>$ with the homotopy fiber of the (non-canonically nullhomotopic) map
\[
q'\colon \bC^*\times\bC\to\bC^2\setminus\{0\},\quad (w_1,w_2)\mapsto(w_1^e,w_2),
\]
i.e. the restriction of $q$ on $\bC^*\times\bC$.
The definition of $\Conf^*_n(e)$ as a subspace of $\Conf_n(e)$ implies that $p_n$ restricts to a map $p'_n\colon\Conf^*_n(e)\to\bC^*\times\bC$. The restriction $h'_n$ of $h_n$ on $\Conf_n^*(e)\times[0,1]$ provides a homotopy between $q'\circ p'_n$ and the constant map $\Conf_n^*(e)\to\bC^2\setminus\{0\}$ at $(1,n)$. We define the map of $s'$ on $\Conf^*_n(e)$ as the map induced by  $p'_n$ and $h'_n$, and conjecture that $s'$ is homotopic to the following composition, whose middle equivalence is provided by Theorem \ref{thm:B}.
\[
\D^*(e)\to L_\A(\D^*(e))\simeq \Omega^2 S^2 \times \Omega S^2\left<e\right>\xrightarrow{\text{project}} \Omega S^2\left<e\right>.
\]

\bibliographystyle{alpha}
\bibliography{biblio}
\end{document}